\newcounter{mycount}
\theoremstyle{plain}
\newtheorem{theorem}[mycount]{Theorem}
\newtheorem{lemma}[mycount]{Lemma}
\theoremstyle{definition}
\theoremstyle{example}
\theoremstyle{remark}
\numberwithin{equation}{section} \numberwithin{figure}{section}
\begin{document}
\title{A bijective enumeration of $3$-strip tableaux}
\author{Emma Yu Jin$^{\dag}$}
\thanks{$^{\dag}$ Corresponding author email: yu.jin@tuwien.ac.at. Tel.: $+43(1)58801-104583$.
The author was supported by the German Research Foundation DFG, JI 207/1-1, and the Austrian Research
Fund FWF, project SFB F50 Algorithmic and Enumerative Combinatorics}
\email{yu.jin@tuwien.ac.at}
\address{Institut f\"ur Diskrete Mathematik und Geometrie, TU Wien, Wiedner Hauptstr. 8--10, 1040 Wien, Austria}

\maketitle

\begin{abstract}
Baryshnikov and Romik derived the combinatorial identities for the numbers of the $m$-strip tableaux. This generalized the classical Andr\'{e}'s theorem for the number of up-down permutations.
They asked for a bijective proof for the enumeration of $3$-strip tableaux. In this paper we will provide such a bijective proof. First we count the $3$-strip tableaux by decomposition. Secondly we will apply this ``decomposition'' idea on the up-down permutations and down-up permutations to enumerate the $3$-strip tableaux bijectively.
\end{abstract}

\section{Introduction}
A permutation $w=a_1a_2\cdots a_n\in\mathfrak{S}_n$ is called an {\it
up-down permutation} if $a_1<a_2>a_3<a_4>\cdots$. It is well-known
that the number of up-down permutations of $[n]$ is Euler number
$E_n$, whose exponential generating function is
\begin{eqnarray}\label{E:andre}
\sum_{n\ge 0}E_n\frac{x^n}{n!}=\sec x+\tan x.
\end{eqnarray}
This is also called Andr\'{e}'s theorem \cite{And:79}. Sometimes $E_{2n}$ is called a secant number
and $E_{2n+1}$ a tangent number. A permutation $w=a_1a_2\cdots a_n\in\mathfrak{S}_n$ is called a {\it down-up permutation} if $a_1>a_2<a_3>a_4<\cdots$. The bijection
$a_1a_2\cdots a_{n}\mapsto (n+1-a_1)\,(n+1-a_2)\,\cdots (n+1-a_n)$ transforms a down-up permutation into an up-down permutation. Up-down permutations can be thought of as a special case of a standard
Young tableau. For instance, the permutation
$\sigma=132546$ is an up-down
permutation, which can be identified as the tableau below.
\begin{center}
\setlength{\unitlength}{3pt}
\begin{picture}(-30,20)(10,25)
\put(-22,25){\line(1,0){12}}\put(-22,25){\line(0,1){6}}
\put(-16,25){\line(0,1){12}}\put(-10,25){\line(0,1){12}}
\put(-22,31){\line(1,0){18}}\put(-4,31){\line(0,1){12}}
\put(-16,37){\line(1,0){12}}\put(-10,37){\line(0,1){6}}
\put(-10,43){\line(1,0){12}}\put(2,43){\line(0,-1){6}}
\put(2,37){\line(-1,0){6}}
%%%%%%%%%%%%%%%%
\put(-20,27){\small{$1$}}\put(-14,27){\small{$3$}}
\put(-14,33){\small{$2$}}\put(-8,33){\small{$5$}}
\put(-8,39){\small{$4$}}\put(-2,39){\small{$6$}}
\end{picture}
\end{center}
We adopt the notations from \cite{BR:07}. Formally speaking, an {\it integer partition} is a sequence
$\lambda=(\lambda_1,\lambda_2,\ldots,\lambda_k)$ where $\lambda_1\ge
\lambda_2\ge \cdots\ge\lambda_k>0$ are integers. We identify each
partition $\lambda$ with its Young diagram and speak of them interchangeably. Given a partition $\lambda=(\lambda_1,\lambda_2,\ldots,\lambda_k)$, the {\it Young diagram} of shape $\lambda$ is a left-justified array of $\lambda_1+\lambda_2+\cdots+\lambda_n$ boxes with $\lambda_1$ in the first row, $\lambda_2$ in the second row, and so on. A {\it skew Young diagram} is the difference $\lambda/\mu$
of two Young diagrams where $\mu\subset\lambda$. If
$\lambda/\mu$ is a skew Young diagram, a {\it standard Young tableau} of shape $\lambda/ \mu$ is a filling of the boxes of $\lambda / \mu$ with the integers
$1,2,\ldots,\vert\lambda / \mu\vert$ that is increasing
along rows and columns, where $\vert\lambda / \mu\vert$ is the number of boxes of shape $\lambda / \mu$ and is called the size of shape $\lambda / \mu$. Given any skew shape $\lambda/\mu$ of size $n$, let $f^{\lambda/\mu}$ denote the number of standard Young tableaux of shape $\lambda/\mu$, i.e., the number of ways to put $1,2,\ldots,n$ into the squares of the diagram of $\lambda/\mu$, each number $1,2,\ldots,n$ occurring exactly once, so that the rows and columns are increasing. Given a standard Young tableau, we can form the {\it reading word} of the tableau by reading the bottom row from left to right, then the next-to-bottom row, and so on. The reading word of the above tableau is exactly the permutation $\sigma=132546$.

Up-down permutations of $[n]$ are in simple bijection with standard Young tableaux of shape
$$\theta_n=(m+1,m,m-1,\ldots,3,2)/ (m-1,m-2,\ldots,1,0)$$ when $n=2m$ is even, or $$\theta_n=(m,m,m-1,m-2,\ldots,3,2)/
(m-1,m-2,\ldots,1,0)$$
when $n=2m-1\ge 5$ is odd and $\theta_1=(1)$, $\theta_3=(2,2)/(1,0)$. Clearly this bijection converts each standard Young tableau of shape $\theta_n$ into an up-down permutation via its reading word. By ``thickening'' the shape $\theta_n$, Baryshnikov and Romik generalized the classical enumeration formula (\ref{E:andre}) for up-down permutations \cite{BR:07}. They introduced the $m$-strip tableaux and enumerated the $m$-strip tableaux by using transfer operators, but the computations become more complicated as $m$ increases. The standard Young tableaux of shape $\theta_n$ are exactly $2$-strip tableaux, which are counted by the Euler number $E_n$. For the particular $3$-strip tableaux, there are three different shapes of $3$-strip tableaux, denoted by $\sigma_{3n-2},\sigma_{3n-1},\sigma_{3n}$, respectively. Let $\sigma_{3n-2}$ be the Young diagram of shape
$$(m,m,m-1,m-2,\ldots,3,2)/(m-2,m-3,\ldots,1,0,0)$$
that contains $3n-2$ boxes when $m\ge 3$, and $\sigma_1$ be the Young diagram of shape $(1)$, $\sigma_4$ be the Young diagram of shape $(2,2)$. Let $\sigma_{3n-1}$ be the Young diagram of shape $$(m,m,m,m-1,m-2,\ldots,3,2)/(m-1,m-2,\ldots,1,0,0)$$
that contains $3n-1$ boxes when $m\ge 3$, and $\sigma_2$ be the Young diagram of shape $(1,1)$, $\sigma_5$ be the Young diagram of shape $(2,2,2)/(1,0,0)$. Let furthermore
$\sigma_{3n}$ be the Young diagram of shape
$$(m,m,m,m-1,\ldots,3,2,1)/(m-1,m-2,\ldots,1,0,0,0)$$
that contains $3n$ boxes when $m\ge 3$, and $\sigma_3$ be the Young diagram of shape $(1,1,1)$, $\sigma_6$ be the Young diagram of shape $(2,2,2,1)/(1,0,0,0)$. Below we show three standard Young tableaux of shape $\sigma_{7},\sigma_{8},\sigma_9$, from left to right, respectively.
\begin{center}
\setlength{\unitlength}{3pt}
\begin{picture}(5,30)(10,20)
\put(-32,24){\line(1,0){12}}\put(-32,24){\line(0,1){12}}
\put(-26,24){\line(0,1){18}}\put(-20,24){\line(0,1){18}}
\put(-32,30){\line(1,0){18}}\put(-32,36){\line(1,0){18}}
\put(-26,42){\line(1,0){12}}\put(-14,42){\line(0,-1){12}}
%%%%%%%%%%%%%%%
\put(-29.5,26){\small{$2$}}\put(-29.5,32){\small{$1$}}
\put(-23.5,32){\small{$4$}}\put(-23.5,26){\small{$5$}}
\put(-23.5,38){\small{$3$}}\put(-17.5,38){\small{$6$}}
\put(-17.5,32){\small{$7$}}\put(-23.5,46){\small{$\sigma_7$}}
%%%%%%%%%%%%%%%
\put(-2,21){\line(1,0){12}}\put(-2,21){\line(0,1){12}}
\put(4,21){\line(0,1){18}}\put(10,21){\line(0,1){24}}
\put(-2,27){\line(1,0){18}}\put(-2,33){\line(1,0){18}}
\put(4,39){\line(1,0){12}}\put(16,39){\line(0,-1){12}}
\put(10,45){\line(1,0){6}}\put(16,45){\line(0,-1){6}}
%%%%%%%%%%%%%%%
\put(0.5,23){\small{$2$}}\put(0.5,29){\small{$1$}}
\put(6.5,29){\small{$4$}}\put(6.5,23){\small{$5$}}
\put(6.5,35){\small{$3$}}\put(12.5,35){\small{$7$}}
\put(12.5,41){\small{$6$}}\put(12.5,29){\small{$8$}}
\put(0,46){\small{$\sigma_8$}}
%%%%%%%%%%%%%%%
\put(28,25){\line(1,0){12}}\put(28,25){\line(0,1){12}}
\put(34,25){\line(0,1){18}}\put(40,25){\line(0,1){24}}
\put(28,31){\line(1,0){18}}\put(28,37){\line(1,0){18}}
\put(34,43){\line(1,0){12}}\put(46,43){\line(0,-1){12}}
\put(40,49){\line(1,0){6}}\put(46,49){\line(0,-1){6}}
\put(28,25){\line(0,-1){6}}\put(28,19){\line(1,0){6}}
\put(34,19){\line(0,1){6}}
%%%%%%%%%%%%%%%
\put(30.5,27){\small{$2$}}\put(30.5,33){\small{$1$}}
\put(30.5,21){\small{$3$}}\put(36.5,33){\small{$5$}}
\put(36.5,39){\small{$4$}}\put(36.5,27){\small{$6$}}
\put(42.5,39){\small{$8$}}\put(42.5,45){\small{$7$}}
\put(42.5,33){\small{$9$}}\put(32,46){\small{$\sigma_9$}}
\end{picture}
\end{center}
%%%%%%%%%%%%%
%%%%%%%%%%%%%%%%%%%%%%%%%%%%%%%%%%%%%%%%%%
%%%%%%%%%%%%%
Baryshnikov and Romik proved
\begin{theorem}[\cite{BR:07}, $3$-strip tableaux]\label{T:3strip}
\begin{align}
\label{E:3s2}f^{\sigma_{3n-2}}&=\frac{(3n-2)!E_{2n-1}}{(2n-1)!2^{2n-2}},\\
\label{E:3s1}f^{\sigma_{3n-1}}&=\frac{(3n-1)!E_{2n-1}}{(2n-1)!2^{2n-1}},\\
\label{E:3s}f^{\sigma_{3n}}&=\frac{(3n)!(2^{2n-1}-1)E_{2n-1}}{(2n-1)!2^{2n-1}
(2^{2n}-1)}.
\end{align}
\end{theorem}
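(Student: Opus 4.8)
The plan is to prove the three identities simultaneously, by a single \emph{decomposition} of a $3$-strip tableau that mirrors the classical recursion behind the Euler numbers, and then to upgrade that decomposition into an explicit bijection with up-down and down-up permutations. The starting observation is structural: reading a standard Young tableau of shape $\sigma_N$ as a linear extension of its box poset, the largest label $N$ is forced to sit at one of the two outer corners of the zigzag, and more generally the diagonals (fixed content $j-i$) of a width-$3$ strip carry at most two boxes each, so the strip is, diagonal by diagonal, a width-$2$ object whose two ``strands'' alternate. First I would make this precise and use it to peel off the top block of labels: choosing which elements of $[N]$ occupy that block and filling the two resulting shorter strips reassembles a tableau of $\sigma_N$. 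Summed over the cut, this yields a \emph{coupled} convolution recurrence expressing each of $f^{\sigma_{3n-2}},f^{\sigma_{3n-1}},f^{\sigma_{3n}}$ as sums of products $\binom{N}{k}\,f^{\sigma'}f^{\sigma''}$ with $\sigma',\sigma''$ drawn from the three families; the binomials are exactly the interleavings of labels, and they are what will produce the factorial ratios $(3n-2)!/(2n-1)!$, $(3n-1)!/(2n-1)!$, $(3n)!/(2n-1)!$.

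In parallel I would decompose an up-down (respectively down-up) permutation of $[2n-1]$ at the position of its maximum. This is the Entringer splitting that underlies the classical identity $2E_{m+1}=\sum_{k}\binom{m}{k}E_kE_{m-k}$ and hence the generating function $\sec x+\tan x$ of (\ref{E:andre}). The crux is then to \emph{match the two recursions cut by cut}: a $3$-strip tableau projects onto a width-$2$ alternating core of length $2n-1$ (accounting for the factor $E_{2n-1}$), while the remaining $n-1$ boxes of the ``extra strand'' are recorded precisely by the interleaving data of the convolution. Each cut carries in addition a binary orientation, namely which end of the strip the local maximum occupies, equivalently whether the local pattern reads up-down or down-up; there are $2n-2$ such independent choices along the strip that the tableau itself forgets. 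Collecting them shows the enriched permutation-plus-interleaving side to be a $2^{2n-2}$-to-one (respectively $2^{2n-1}$-to-one) cover of the tableaux, which is the source of the powers of $2$ and completes the bijections for $\sigma_{3n-2}$ and $\sigma_{3n-1}$.

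The main obstacle is the third shape $\sigma_{3n}$, whose extra tail box (its inner partition descends all the way to a single-box row) breaks the clean two-to-one orientation count and is exactly what generates the anomalous factor $(2^{2n-1}-1)/(2^{2n}-1)$. Here I do not expect a term-by-term bijection to survive unmodified; instead I would bootstrap from the already-established formulas for $f^{\sigma_{3n-2}}$ and $f^{\sigma_{3n-1}}$ and analyze the coupled recurrence by summing over the position of the tail box. I anticipate that this sum behaves like a finite geometric series in powers of $2$, telescoping to $(2^{2n-1}-1)/(2^{2n}-1)$ once one invokes the arithmetic of the tangent numbers (the identity that expresses $E_{2n-1}$ with a factor $2^{2n}-1$). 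The delicate point is to arrange the bijection so that the tail box contributes precisely the ``surviving'' binary strings counted by $2^{2n-1}-1$ out of the $2^{2n}-1$ orientations; I would therefore treat $\sigma_{3n}$ last, after the base cases $\sigma_3,\sigma_6$ and the other two families are in hand, so that only this single fractional factor must be reconciled.
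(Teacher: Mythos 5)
Your proposal has the right general flavor---decompose the strip, get a binomial convolution, and match it against the Euler-number recursion---but its central decomposition does not exist as described, and this is a genuine gap rather than a presentational one. First, the structural claim is backwards: in a natural labeling the largest entry must occupy a \emph{maximal} element of the box poset, and for $\sigma_{3n-2}$ the maximal elements are the $n-1$ ``peak'' vertices of the diamond chain; the two outer corner boxes are covered by their neighboring peaks, so the maximum can never sit there, and for $n\ge 3$ there are more than two candidate positions. Worse, deleting (or cutting at) the maximum leaves the poset connected through the valley vertex of the same diamond, so the strip does not fall apart into two shorter strips of the family, and the clean convolution you posit, $f^{\sigma_N}=\sum_k\binom{N}{k}f^{\sigma'}f^{\sigma''}$, is numerically false: with $f^{\sigma_2}=1$, $f^{\sigma_5}=5$, $f^{\sigma_7}=42$ one gets $\binom{7}{2}f^{\sigma_2}f^{\sigma_5}+\binom{7}{5}f^{\sigma_5}f^{\sigma_2}=210=5\,f^{\sigma_7}$. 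The correct recursion, eq.~(\ref{E:rec1}), carries the prefactor $\frac{1}{2n-1}$, and manufacturing that prefactor is precisely the hard step the paper solves in Theorem~\ref{T:decom}: one first inserts an auxiliary element $i$ into a tableau on $[3n-1]\setminus\{i\}$ so as to create a covering edge at which the diagram \emph{can} be split into pieces of shapes $\sigma_{3j-1}$ and $\sigma_{3n-3j-1}$, and then a reflection argument together with the identity $\sum_j f^{\sigma_1^{j}}=(n-\tfrac12)f^{\sigma_{3n-2}}$ supplies the normalization. Nothing in your outline plays this role. Likewise, your claim that the permutation side is an explicit $2^{2n-2}$-to-one cover via ``independent binary orientation choices'' is asserted, not constructed; in the paper the powers of $2$ come out analytically (the recursion forces $g(x)=\tan(x/2)$ against $\tan x$, equivalently $F(2^{2/3}x)=2^{2/3}T(x)$ in the bijective version), and the paper's closing remark states explicitly that tracking a uniform multiplicity per tableau is exactly the point where progress stalls.

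The third formula is also left unproved even granting the first two. The paper does not bootstrap $f^{\sigma_{3n}}$ by a telescoping sum over a ``tail box''; it introduces a fourth auxiliary family $\tau_{3n}$ and the insertion identity $(3n)f^{\sigma_{3n-1}}=f^{\sigma_{3n}}+f^{\tau_{3n}}$ of Lemma~\ref{L:funlem}, proves the Bernoulli-type recursion $f^{\tau_{3n}}=\frac{1}{2n+1}\sum_i\binom{3n}{3i}f^{\tau_{3i}}f^{\tau_{3n-3i}}$ (whose generating function is $\frac1x-\cot x$), and then obtains the factor $\frac{2^{2n-1}-1}{2^{2n}-1}$ from the subtraction $f^{\sigma_{3n}}=(3n)f^{\sigma_{3n-1}}-f^{\tau_{3n}}$ together with $B_{2n}=\frac{nE_{2n-1}}{2^{2n-1}(2^{2n}-1)}$; concretely, $\frac{1}{2^{2n-1}}-\frac{1}{2^{2n}-1}=\frac{2^{2n-1}-1}{2^{2n-1}(2^{2n}-1)}$. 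Your anticipated ``finite geometric series in powers of $2$'' has no identified combinatorial carrier, and without an analogue of the auxiliary shape $\tau_{3n}$ (or some other fourth quantity closing the system) the coupled recurrence you describe cannot be closed for $\sigma_{3n}$ at all.
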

Baryshnikov and Romik \cite{BR:07} asked for a bijective proof of Theorem~\ref{T:3strip}, namely to prove
Theorem~\ref{T:3strip} by directly relating $3$-strip tableaux to up-down permutations in some combinatorial way. Here we first prove Theorem~\ref{T:3strip} by decomposing $3$-strip tableaux. The key decomposition offers an inductive way to relate $3$-strip tableaux and up-down permutations. Furthermore, by applying this ``decomposition'' idea on the up-down permutations and down-up permutations, we provide a purely bijective proof of Theorem~\ref{T:3strip}.

After Baryshnikov and Romik published their results \cite{BR:07}, Stanley \cite{Sta:11} also generalized the Young diagram of shape $\theta_n$ into the skew partition $\sigma(a,b,c,n)$. $\sigma(a,b,c,n)$ is defined to be the skew partition whose Young diagram has $a$ squares in the first row, $b$ squares in the other nonempty rows, and $n$ rows in total. Moreover, each row begins $c-1$ columns to the left of the row above, with $b\ge c$. Although the standard Young tableaux of shape $\sigma(a,b,c,n)$ is neither a subset nor a superset of the $m$-strip tableaux, the skew partition $\sigma(2,3,2,n)$ (resp. $\sigma(3,3,2,n)$) after transposing rows to columns, is exactly $\sigma_{3n-1}$ (resp. $\sigma_{3n}$). It follows that $f^{\sigma(2,3,2,n)}=f^{\sigma_{3n-1}}$ and $f^{\sigma(3,3,2,n)}=f^{\sigma_{3n}}$.

Stanley \cite{Sta:11} derived the generating functions for the standard Young tableaux of shape $\sigma(a,b,c,n)$ by analyzing the determinant in the Aitken formula. For any integer partition $\lambda$, let $\ell(\lambda)$ be the length of $\lambda$. If $\ell(\lambda)\le m$ and $\mu\subseteq \lambda$, the Aitken formula asserts that
$$f^{\lambda/\mu}=n!\det\left[\frac{1}{(\lambda_i-\mu_j-i+j)!}\right]_{i,j=1}^m.$$
The Aikten formula can be obtained by applying the exponential
specialization on the Jacobi-Trudi identity, see \cite{Stanley:ec2,Sta:11}. For the special cases $f^{\sigma(2,3,2,n)}=f^{\sigma_{3n-1}}$ and $f^{\sigma(3,3,2,n)}=f^{\sigma_{3n}}$, the exponential generating functions for $3$-strip tableaux of shape $\sigma_{3n-1}$ and $\sigma_{3n}$ are \cite{Sta:11}
\begin{align}
\label{E:gen2}\sum_{n\ge 1}\frac{f^{\sigma_{3n-1}}}{(3n-1)!}x^{2n}
&=\left[\sum_{n\ge 1}\frac{(-1)^{n-1}x^{2n}}{(2n)!}\right]
\left[\sum_{n\ge 0}\frac{(-1)^{n}x^{2n}}{(2n+1)!}\right]^{-1}=x\tan(\frac{x}{2}),\\
\label{E:gen3}\sum_{n\ge 1}\frac{f^{\sigma_{3n}}}{(3n)!}x^{2n}
&=\left[\sum_{n\ge 1}\frac{(-1)^{n-1}x^{2n}}{(2n+1)!}\right]
\left[\sum_{n\ge
0}\frac{(-1)^{n}x^{2n}}{(2n+1)!}\right]^{-1}=\frac{x}{\sin(x)}-1.
\end{align}
In Section~\ref{S:decom} we will prove
the generating functions for the $3$-strip tableaux of shape $\sigma_{3n-i}$ for $i=0,1,2$
by decomposing the $3$-strip tableaux. In Section~\ref{S:bij} we give a bijective proof of
Theorem~\ref{T:3strip}. Section~\ref{S:bij} is independent of the generating functions in
subsection~\ref{S:sub1}.
%%%%%%%%%%%
%%%%%%%%%%%%%%%%%%%%%%%%%%%%%%%%
%%%%%%%%%%%
\section{The key decomposition}\label{S:decom}
In what follows, we represent each standard Young tableau by a natural labeling on its corresponding poset and posets are depicted as Hasse diagrams. For $i=0,1,2$, let $P_{\sigma_{3n-i}}$ be the poset whose elements are the squares of the Young diagram of shape $\sigma_{3n-i}$, with $t$ covering $s$ if $t$ lies directly to the right or directly below $s$, with no squares in between. In this way, each Young diagram of shape $\sigma_{3n-i}$ can be represented by the Hasse diagram of $P_{\sigma_{3n-i}}$ and we will speak of them interchangeably. A {\it natural labeling} of $P_{\sigma_{3n-i}}$ is an order-preserving bijection $\eta:P_{\sigma_{3n-i}}\rightarrow [3n-i]$, i.e., a natural labeling $\eta$ is a bijection such that $\eta(x)\le \eta(y)$ for every $x,y\in P_{\sigma_{3n-i}}$ and $x\le y$. Sometimes a natural labeling of $P_{\sigma_{3n-i}}$ is also called a {\it linear extension} of $P_{\sigma_{3n-i}}$. The number of natural labelings of $P_{\sigma_{3n-i}}$ is denoted $e(P_{\sigma_{3n-i}})$. Then we have $f^{\sigma_{3n-i}}=e(P_{\sigma_{3n-i}})$ because each $3$-strip tableau of shape $\sigma_{3n-i}$ can be identified as a natural labeling of the poset $P_{\sigma_{3n-i}}$ for $i=0,1,2$, see \cite{Stanley:ec2}. For example, every $3$-strip tableau of
\begin{center}
\setlength{\unitlength}{3pt}
\begin{picture}(5,28)(10,20)
\put(-32,24){\line(1,0){12}}\put(-32,24){\line(0,1){12}}
\put(-26,24){\line(0,1){18}}\put(-20,24){\line(0,1){18}}
\put(-32,30){\line(1,0){18}}\put(-32,36){\line(1,0){18}}
\put(-26,42){\line(1,0){12}}\put(-14,42){\line(0,-1){12}}
%%%%%%%%%%%%%%%
\put(-29.5,26){\small{$2$}}\put(-29.5,32){\small{$1$}}
\put(-23.5,32){\small{$4$}}\put(-23.5,26){\small{$5$}}
\put(-23.5,38){\small{$3$}}\put(-17.5,38){\small{$6$}}
\put(-17.5,32){\small{$7$}}
%%%%%%%%%%%%%%%
\put(-2,21){\line(1,0){12}}\put(-2,21){\line(0,1){12}}
\put(4,21){\line(0,1){18}}\put(10,21){\line(0,1){24}}
\put(-2,27){\line(1,0){18}}\put(-2,33){\line(1,0){18}}
\put(4,39){\line(1,0){12}}\put(16,39){\line(0,-1){12}}
\put(10,45){\line(1,0){6}}\put(16,45){\line(0,-1){6}}
%%%%%%%%%%%%%%%
\put(0.5,23){\small{$2$}}\put(0.5,29){\small{$1$}}
\put(6.5,29){\small{$4$}}\put(6.5,23){\small{$5$}}
\put(6.5,35){\small{$3$}}\put(12.5,35){\small{$7$}}
\put(12.5,41){\small{$6$}}\put(12.5,29){\small{$8$}}
%%%%%%%%%%%%%%%
\put(28,25){\line(1,0){12}}\put(28,25){\line(0,1){12}}
\put(34,25){\line(0,1){18}}\put(40,25){\line(0,1){24}}
\put(28,31){\line(1,0){18}}\put(28,37){\line(1,0){18}}
\put(34,43){\line(1,0){12}}\put(46,43){\line(0,-1){12}}
\put(40,49){\line(1,0){6}}\put(46,49){\line(0,-1){6}}
\put(28,25){\line(0,-1){6}}\put(28,19){\line(1,0){6}}
\put(34,19){\line(0,1){6}}
%%%%%%%%%%%%%%%
\put(30.5,27){\small{$2$}}\put(30.5,33){\small{$1$}}
\put(30.5,21){\small{$3$}}\put(36.5,33){\small{$5$}}
\put(36.5,39){\small{$4$}}\put(36.5,27){\small{$6$}}
\put(42.5,39){\small{$8$}}\put(42.5,45){\small{$7$}}
\put(42.5,33){\small{$9$}}
\end{picture}
\end{center}
is represented by a natural labeling of its corresponding poset
\begin{center}
\setlength{\unitlength}{3pt}
\begin{picture}(40,18)(-5,22)
\put(-32,30){\line(1,1){6}}\put(-32,30){\circle*{0.5}}
\put(-26,36){\circle*{0.5}}\put(-26,36){\line(1,-1){6}}
\put(-20,30){\circle*{0.5}}\put(-20,30){\line(-1,-1){6}}
\put(-26,24){\circle*{0.5}}\put(-26,24){\line(-1,1){6}}
%%%%%%%%%%%%%%%
\put(-20,30){\line(1,1){6}}\put(-14,36){\circle*{0.5}}
\put(-14,36){\line(1,-1){6}}\put(-8,30){\circle*{0.5}}
\put(-8,30){\line(-1,-1){6}}\put(-14,24){\circle*{0.5}}
\put(-14,24){\line(-1,1){6}}
%%%%%%%%%%%%%%%
\put(-34,31){\small{$6$}}\put(-26.5,37){\small{$7$}}
\put(-14.5,37){\small{$5$}}\put(-20.5,31){\small{$4$}}
\put(-8,31){\small{$2$}}\put(-26.5,21){\small{$3$}}
\put(-14.5,21){\small{$1$}}
%%%%%%%%%%%%%%%
%%%%%%%%%%%%%%%%%%%%%%%%%%%%%%%%%%%%%%
%%%%%%%%%%%%%%%
\put(3,30){\line(1,1){6}}\put(3,30){\line(-1,-1){6}}
\put(-3,24){\circle*{0.5}}
\put(3,30){\circle*{0.5}}
\put(9,36){\circle*{0.5}}\put(9,36){\line(1,-1){6}}
\put(15,30){\circle*{0.5}}\put(15,30){\line(-1,-1){6}}
\put(9,24){\circle*{0.5}}\put(9,24){\line(-1,1){6}}
%%%%%%%%%%%%%%%
\put(15,30){\line(1,1){6}}\put(21,36){\circle*{0.5}}
\put(21,36){\line(1,-1){6}}\put(27,30){\circle*{0.5}}
\put(27,30){\line(-1,-1){6}}\put(21,24){\circle*{0.5}}
\put(21,24){\line(-1,1){6}}
%%%%%%%%%%%%%%%
\put(1,31){\small{$7$}}\put(8.5,37){\small{$8$}}
\put(20.5,37){\small{$5$}}\put(14.5,31){\small{$4$}}
\put(27,31){\small{$2$}}\put(8.5,21){\small{$3$}}
\put(20.5,21){\small{$1$}}\put(-3.5,21){\small{$6$}}
%%%%%%%%%%%%%%%
%%%%%%%%%%%%%%%%%%%%%%%%%%%%%%%%%%%%%%%%%
%%%%%%%%%%%%%%%
\put(38,30){\line(1,1){6}}\put(38,30){\line(-1,-1){6}}
\put(32,24){\circle*{0.5}}
\put(38,30){\circle*{0.5}}
\put(44,36){\circle*{0.5}}\put(44,36){\line(1,-1){6}}
\put(50,30){\circle*{0.5}}\put(50,30){\line(-1,-1){6}}
\put(44,24){\circle*{0.5}}\put(44,24){\line(-1,1){6}}
%%%%%%%%%%%%%%%
\put(50,30){\line(1,1){6}}\put(56,36){\circle*{0.5}}
\put(56,36){\line(1,-1){6}}\put(62,30){\circle*{0.5}}
\put(62,30){\line(1,1){6}}\put(68,36){\circle*{0.5}}
\put(62,30){\line(-1,-1){6}}\put(56,24){\circle*{0.5}}
\put(56,24){\line(-1,1){6}}
%%%%%%%%%%%%%%%
\put(36,31){\small{$8$}}\put(43.5,37){\small{$9$}}
\put(55.5,37){\small{$6$}}\put(67.5,37){\small{$3$}}
\put(49.5,31){\small{$5$}}
\put(61.5,31){\small{$2$}}\put(43.5,21){\small{$4$}}
\put(55.5,21){\small{$1$}}\put(31.5,21){\small{$7$}}
\end{picture}
\end{center}
from left to right.  Let $\tau_{3n}$ be the Young diagram of shape $(m,m,m,m-1,m-2,\ldots,3)/(m-1,m-2,\ldots,1,0)$ that contains $3n$ boxes when $n\ge 2$, and $\tau_3$ be the Young diagram of shape $(2,2)/(1,0)$. For instance, a standard Young tableau of shape $\tau_{9}$ and its representation are depicted as below.
\begin{center}
\setlength{\unitlength}{3pt}
\begin{picture}(30,28)(20,20)
%%%%%%%%%%%%%%%%%%%%%%%%%%%%%
\put(2,22){\line(1,0){12}}\put(2,22){\line(0,1){12}}
\put(8,22){\line(0,1){18}}\put(14,22){\line(0,1){24}}
\put(2,28){\line(1,0){18}}\put(2,34){\line(1,0){18}}
\put(8,40){\line(1,0){12}}\put(20,40){\line(0,-1){12}}
\put(14,46){\line(1,0){6}}\put(20,46){\line(0,-1){6}}
\put(2,22){\line(-1,0){6}}\put(-4,22){\line(0,1){6}}
\put(-4,28){\line(1,0){6}}
%%%%%%%%%%%%%%%
\put(4.5,24){\small{$3$}}\put(4.5,30){\small{$1$}}
\put(10.5,30){\small{$5$}}
\put(10.5,36){\small{$4$}}\put(10.5,24){\small{$6$}}
\put(16.5,36){\small{$8$}}\put(16.5,42){\small{$7$}}
\put(16.5,30){\small{$9$}}\put(-1.5,24){\small{$2$}}
%%%%%%%%%%%%%%%
\put(38,30){\line(1,1){6}}\put(38,30){\line(-1,-1){6}}
\put(32,24){\circle*{0.5}}
\put(38,30){\circle*{0.5}}
\put(44,36){\circle*{0.5}}\put(44,36){\line(1,-1){6}}
\put(50,30){\circle*{0.5}}\put(50,30){\line(-1,-1){6}}
\put(44,24){\circle*{0.5}}\put(44,24){\line(-1,1){6}}
%%%%%%%%%%%%%%%
\put(50,30){\line(1,1){6}}\put(56,36){\circle*{0.5}}
\put(56,36){\line(1,-1){6}}\put(62,30){\circle*{0.5}}
%\put(62,30){\line(1,1){6}}
%\put(68,36){\circle*{0.5}}
\put(62,30){\line(1,-1){6}}\put(68,24){\circle*{0.5}}
\put(62,30){\line(-1,-1){6}}\put(56,24){\circle*{0.5}}
\put(56,24){\line(-1,1){6}}
%%%%%%%%%%%%%%%
\put(36,31){\small{$8$}}\put(43.5,37){\small{$9$}}
\put(55.5,37){\small{$6$}}\put(67.5,21){\small{$2$}}
\put(49.5,31){\small{$5$}}
\put(61.5,31){\small{$3$}}\put(43.5,21){\small{$4$}}
\put(55.5,21){\small{$1$}}\put(31.5,21){\small{$7$}}
\end{picture}
\end{center}
We use $[n]$ to denote the set $\{1,2,\ldots,n\}$. We say a standard Young tableau $T$ of shape $\sigma_{3n-i}$ on a finite subset $I=\{m_1,\ldots,m_{3n-i}\}$ of $\mathbb{N}$ if the corresponding natural labeling of $P_{\sigma_{3n-i}}$ is an order-preserving bijection $\eta:P_{\sigma_{3n-i}}\rightarrow I$. If $I=[3n-i]$, then $T$ is the usual standard Young tableau of shape $\sigma_{3n-i}$. Let $P_{\sigma_{3n-i}^{d}}$ be the dual poset of $P_{\sigma_{3n-i}}$ and $\sigma_{3n-i}^{d}$ be the Hasse diagram of dual poset $P_{\sigma_{3n-i}^{d}}$, then the shape $\sigma_{3n-i}^{d}$ is obtained by flipping the shape $\sigma_{3n-i}$ upside down and therefore $f^{\sigma_{3n-i}}=f^{\sigma_{3n-i}^d}$. We will use this fact to prove two simple but important observations in Lemma~\ref{L:funlem}.
\begin{lemma}\label{L:funlem}
The numbers $f^{\sigma_{3n-2}}$, $f^{\sigma_{3n-1}}$, $f^{\sigma_{3n}}$ and $f^{\tau_{3n}}$ satisfy
\begin{align}\label{E:div1}
(3n-1)f^{\sigma_{3n-2}}&=2f^{\sigma_{3n-1}}\\
\label{E:reTf}(3n)f^{\sigma_{3n-1}}&=f^{\sigma_{3n}}+f^{\tau_{3n}}.
\end{align}
\end{lemma}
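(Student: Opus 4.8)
The plan is to prove both identities by a single elementary principle: if a skew shape is enlarged by one box placed at a corner, then conditioning on the entry occupying that box expresses the number of linear extensions of the larger poset as a sum, over the linear extensions of the smaller poset, of the entry sitting at a distinguished corner. Throughout I write $T(x)$ for the entry that a natural labeling $T$ places on a box $x$, and I use $f^{\lambda/\mu}=e(P_{\lambda/\mu})$ together with the duality $f^{\sigma}=f^{\sigma^{d}}$ recalled just before the statement. The precise counting lemma I would isolate first is this: if $X$ is a box adjoined to a shape $\mu$ so that in the enlarged poset $X$ is minimal and covered only by a box $c$ of $\mu$, then removing $X$ and standardizing is a bijection onto the pairs $(T,v)$ with $1\le v\le T(c)$, so the enlarged shape has $\sum_{T}T(c)$ labelings; dually, if $X$ is maximal and covers only $c$, the count is $\sum_{T}\bigl((|\mu|+1)-T(c)\bigr)$.

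For \eqref{E:div1} I would first check, from the partition data, that $\sigma_{3n-1}$ is obtained from $\sigma_{3n-2}$ by adjoining one box $X$ directly above the top-right corner box $b$ of $\sigma_{3n-2}$, and that $X$ is then minimal and covered only by $b$. The counting lemma gives $f^{\sigma_{3n-1}}=\sum_{T}T(b)$, summed over the $f^{\sigma_{3n-2}}$ natural labelings of $P_{\sigma_{3n-2}}$. Next I would show that $\sigma_{3n-2}$, whose row profile $2,3,\dots,3,2$ is palindromic, is invariant under the anti-diagonal reflection $\beta\colon (i,j)\mapsto (m+1-j,\,m+1-i)$ (equivalently, that it is both self-transpose and centrally symmetric), and that $\beta$ fixes the corner $b=(1,m)$. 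Because $\beta$ is order-reversing, the map $T\mapsto T^{\beta}$ defined by $T^{\beta}(x)=(3n-1)-T(\beta(x))$ is an involution on the labelings of $P_{\sigma_{3n-2}}$ with $T^{\beta}(b)=(3n-1)-T(b)$. Hence the statistic $T(b)$ is symmetrically distributed about $(3n-1)/2$, so $2\sum_{T}T(b)=(3n-1)f^{\sigma_{3n-2}}$, which is exactly \eqref{E:div1}.

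For \eqref{E:reTf} the key is to locate the right corner of $\sigma_{3n-1}$. Let $z$ be the bottom-left corner box, the leftmost box of the bottom row. I would verify the (slightly counterintuitive) geometric fact that this one corner carries two distinct box-additions: adjoining a box directly below $z$ produces $\sigma_{3n}$, with the new box maximal and larger than $z$, whereas adjoining a box directly to the left of $z$ produces $\tau_{3n}$, with the new box minimal and smaller than $z$. The counting lemma then gives $f^{\sigma_{3n}}=\sum_{T}\bigl(3n-T(z)\bigr)$ and $f^{\tau_{3n}}=\sum_{T}T(z)$, both over the $f^{\sigma_{3n-1}}$ labelings $T$. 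The bijective heart is that sending a pair $(T,c)$ with $c\in[3n]$ to the labeling obtained by inserting $c$ at this corner — placing it to the left of $z$ when $c$ is at most the re-standardized entry at $z$, and below $z$ otherwise — is a bijection from the pairs $(T,c)$ onto the standard Young tableaux of $\sigma_{3n}$ together with those of $\tau_{3n}$. Summing the two expressions termwise, $T(z)+\bigl(3n-T(z)\bigr)=3n$ for every $T$, gives $f^{\sigma_{3n}}+f^{\tau_{3n}}=(3n)f^{\sigma_{3n-1}}$, which is \eqref{E:reTf}.

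The hard part will be purely the bookkeeping of the skew shapes: one must confirm, directly from the defining partitions of $\sigma_{3n-2},\sigma_{3n-1},\sigma_{3n}$ and $\tau_{3n}$, that each adjoined box sits exactly where claimed, the most delicate point being that $\tau_{3n}$ arises from a minimal (leftward) addition at the very same corner $z$ at which $\sigma_{3n}$ arises from a maximal (downward) addition, and that the anti-diagonal reflection $\beta$ really is a symmetry of $\sigma_{3n-2}$ fixing $b$. I would also treat the small shapes $\sigma_{1},\sigma_{2},\sigma_{3}$ and $\tau_{3}$ by hand, since there the generic coordinate description degenerates.
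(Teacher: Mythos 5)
Your proposal is correct, and its engine coincides with the paper's: both identities are obtained by inserting one extra value at an extreme corner of the smaller shape, the two cases (new value smaller or larger than the corner entry) yielding the two enlarged shapes. For \eqref{E:reTf} your argument is literally the paper's proof: the paper forms pairs $(T_1,i)$ with $i\in[3n]$ and $T_1$ of shape $\sigma_{3n-1}$ on $[3n]-\{i\}$, and lets the first reading-word entry $b_1$ (your corner $z$) cover $i$ when $i<b_1$, giving $\tau_{3n}$, or be covered by $i$ when $i>b_1$, giving $\sigma_{3n}$; your ``counting lemma'' is the same bijection organized as a corner statistic. Where you genuinely diverge is the symmetry step in \eqref{E:div1}. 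The paper keeps both cases of the insertion at the top-right corner: the case $i>a_{3n-2}$ produces a filling of the shape obtained by adding a box on the other side of that corner, which it identifies as the dual shape $\sigma_{3n-1}^{d}$, and it closes the argument with the generic poset fact $f^{\sigma_{3n-1}}=f^{\sigma_{3n-1}^{d}}$. You instead discard that second case and prove directly that the statistic $T(b)$ is symmetric about $(3n-1)/2$, using the order-reversing anti-diagonal reflection of $\sigma_{3n-2}$ itself, which fixes $b$. Both mechanisms are correct and equally elementary; indeed your reflection, extended to the added box, is essentially the isomorphism between ``box above $b$'' and ``box to the right of $b$'' that underlies the paper's duality step. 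The paper's version buys a little simplicity, since $e(P)=e(P^{d})$ requires no shape-specific verification, whereas yours requires checking the anti-diagonal self-symmetry of $\sigma_{3n-2}$ --- a fact that does hold (with $m=n$, row $i$ of $\sigma_{3n-2}$ occupies columns $n-i,\dots,n-i+2$ for $1<i<n$, and the box set is invariant under $(i,j)\mapsto(n+1-j,n+1-i)$) and that you rightly flag, together with the degenerate small shapes, as the remaining bookkeeping.
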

\begin{proof}
Given a pair $(T,i)$ where $i\in[3n-1]$ and $T$ is a standard Young tableau of shape $\sigma_{3n-2}$ on the set $[3n-1]-\{i\}$. Suppose $\omega(T)=a_1a_2\cdots a_{3n-2}$ is the reading word of tableau $T$, then the $3$-strip tableau $T$, if we omit the labels
in between, is
\begin{center}
\setlength{\unitlength}{3pt}
\begin{picture}(-30,18)(10,22)
\put(-32,30){\line(1,1){6}}\put(-32,30){\circle*{0.5}}
\put(-26,36){\circle*{0.5}}\put(-26,36){\line(1,-1){6}}
\put(-20,30){\circle*{0.5}}\put(-20,30){\line(-1,-1){6}}
\put(-26,24){\circle*{0.5}}\put(-26,24){\line(-1,1){6}}
%%%%%%%%%%%%%%%
\put(-20,30){\line(1,1){6}}\put(-14,36){\circle*{0.5}}
\put(-14,36){\line(1,-1){6}}\put(-8,30){\circle*{0.5}}
\put(-8,30){\line(-1,-1){6}}\put(-14,24){\circle*{0.5}}
\put(-14,24){\line(-1,1){6}}
%%%%%%%%%%%%%%%
\put(-6,30){\circle*{0.5}}\put(-4,30){\circle*{0.5}}
\put(-2,30){\circle*{0.5}}\put(0,30){\circle*{0.5}}
\put(0,30){\line(1,1){6}}\put(6,36){\circle*{0.5}}
\put(6,36){\line(1,-1){6}}\put(12,30){\circle*{0.5}}
\put(12,30){\line(-1,-1){6}}\put(6,24){\circle*{0.5}}
\put(6,24){\line(-1,1){6}}
%%%%%%%%%%%%%%%
\put(-40.5,30){\small{$a_{3n-2}$}}
\put(-30,38){\small{$a_{3n-4}$}}
\put(-30,21){\small{$a_{3n-3}$}}
%%%%%%%%%%%%%%%
\put(12,30){\line(1,1){6}}\put(18,36){\circle*{0.5}}
\put(18,36){\line(1,-1){6}}\put(24,30){\circle*{0.5}}
\put(24,30){\line(-1,-1){6}}\put(18,24){\circle*{0.5}}
\put(18,24){\line(-1,1){6}}
\put(24.5,30){\small{$a_{1}$}}
\put(17,38){\small{$a_{2}$}}
\put(17,21){\small{$a_{3}$}}
\end{picture}
\end{center}
where $a_j\in [3n-1]-\{i\}$ for all $j$. If $i<a_{3n-2}$, then we let $a_{3n-2}$
cover $i$. Graphically, we obtain
\begin{center}
\setlength{\unitlength}{3pt}
\begin{picture}(-30,18)(10,22)
\put(-32,30){\line(1,1){6}}\put(-32,30){\circle*{0.5}}
\put(-26,36){\circle*{0.5}}\put(-26,36){\line(1,-1){6}}
\put(-20,30){\circle*{0.5}}\put(-20,30){\line(-1,-1){6}}
\put(-26,24){\circle*{0.5}}\put(-26,24){\line(-1,1){6}}
\put(-32,30){\line(-1,-1){6}}\put(-38,24){\circle*{0.5}}
%%%%%%%%%%%%%%%
\put(-20,30){\line(1,1){6}}\put(-14,36){\circle*{0.5}}
\put(-14,36){\line(1,-1){6}}\put(-8,30){\circle*{0.5}}
\put(-8,30){\line(-1,-1){6}}\put(-14,24){\circle*{0.5}}
\put(-14,24){\line(-1,1){6}}
%%%%%%%%%%%%%%%
\put(-6,30){\circle*{0.5}}\put(-4,30){\circle*{0.5}}
\put(-2,30){\circle*{0.5}}\put(0,30){\circle*{0.5}}
\put(0,30){\line(1,1){6}}\put(6,36){\circle*{0.5}}
\put(6,36){\line(1,-1){6}}\put(12,30){\circle*{0.5}}
\put(12,30){\line(-1,-1){6}}\put(6,24){\circle*{0.5}}
\put(6,24){\line(-1,1){6}}
%%%%%%%%%%%%%%%
\put(-40.5,30){\small{$a_{3n-2}$}}
\put(-30,38){\small{$a_{3n-4}$}}
\put(-30,21){\small{$a_{3n-3}$}}
\put(-40,23){\small{$i$}}
%%%%%%%%%%%%%%%
\put(12,30){\line(1,1){6}}\put(18,36){\circle*{0.5}}
\put(18,36){\line(1,-1){6}}\put(24,30){\circle*{0.5}}
\put(24,30){\line(-1,-1){6}}\put(18,24){\circle*{0.5}}
\put(18,24){\line(-1,1){6}}
\put(24.5,30){\small{$a_{1}$}}
\put(17,38){\small{$a_{2}$}}
\put(17,21){\small{$a_{3}$}}
\end{picture}
\end{center}
which is a natural labeling on the poset $P_{\sigma_{3n-1}}$. If $i>a_{3n-2}$, then we let $i$ cover $a_{3n-2}$ and obtain
\begin{center}
\setlength{\unitlength}{3pt}
\begin{picture}(-30,18)(10,22)
\put(-32,30){\line(1,1){6}}\put(-32,30){\circle*{0.5}}
\put(-26,36){\circle*{0.5}}\put(-26,36){\line(1,-1){6}}
\put(-20,30){\circle*{0.5}}\put(-20,30){\line(-1,-1){6}}
\put(-26,24){\circle*{0.5}}\put(-26,24){\line(-1,1){6}}
\put(-32,30){\line(-1,1){6}}\put(-38,36){\circle*{0.5}}
%%%%%%%%%%%%%%%
\put(-20,30){\line(1,1){6}}\put(-14,36){\circle*{0.5}}
\put(-14,36){\line(1,-1){6}}\put(-8,30){\circle*{0.5}}
\put(-8,30){\line(-1,-1){6}}\put(-14,24){\circle*{0.5}}
\put(-14,24){\line(-1,1){6}}
%%%%%%%%%%%%%%%
\put(-6,30){\circle*{0.5}}\put(-4,30){\circle*{0.5}}
\put(-2,30){\circle*{0.5}}\put(0,30){\circle*{0.5}}
\put(0,30){\line(1,1){6}}\put(6,36){\circle*{0.5}}
\put(6,36){\line(1,-1){6}}\put(12,30){\circle*{0.5}}
\put(12,30){\line(-1,-1){6}}\put(6,24){\circle*{0.5}}
\put(6,24){\line(-1,1){6}}
%%%%%%%%%%%%%%%
\put(-40.5,29){\small{$a_{3n-2}$}}
\put(-30,38){\small{$a_{3n-4}$}}
\put(-30,21){\small{$a_{3n-3}$}}
\put(-40,35){\small{$i$}}
%%%%%%%%%%%%%%%
\put(12,30){\line(1,1){6}}\put(18,36){\circle*{0.5}}
\put(18,36){\line(1,-1){6}}\put(24,30){\circle*{0.5}}
\put(24,30){\line(-1,-1){6}}\put(18,24){\circle*{0.5}}
\put(18,24){\line(-1,1){6}}
\put(24.5,30){\small{$a_{1}$}}
\put(17,38){\small{$a_{2}$}}
\put(17,21){\small{$a_{3}$}}
\end{picture}
\end{center}
which is a natural labeling on the dual poset $P_{\sigma_{3n-1}^{d}}$. It follows that
$(3n-1)f^{\sigma_{3n-2}}=f^{\sigma_{3n-1}}
+f^{\sigma_{3n-1}^{d}}=2f^{\sigma_{3n-1}}$, i.e.,
eq.~(\ref{E:div1}) follows. Next we prove eq.~(\ref{E:reTf}). Given a pair $(T_1,i)$
where $i\in [3n]$ and $T_1$ is a standard Young tableau of shape $\sigma_{3n-1}$ on the set $[3n]-\{i\}$. Suppose $\omega(T_1)=b_1b_2\cdots b_{3n-1}$ is the reading word of tableau $T_1$, if $i<b_1$, then we obtain a tableau of shape $\tau_{3n}$ by letting $b_1$ cover $i$. Otherwise if $i>b_1$, then we obtain a tableau of shape $\sigma_{3n}$ by letting $i$ cover $b_1$. This implies eq.~(\ref{E:reTf}).
\end{proof}
From Lemma~\ref{L:funlem} we find in order to enumerate the $3$-strip tableaux of shape $\sigma_{3n-i}$ for $i=0,1,2$, it suffices to enumerate the standard Young tableaux of shape $\sigma_{3n-2}$ and $\tau_{3n}$. In the following we shall introduce the way to decompose each standard Young tableau of shape $\sigma_{3n-2}$ and $\tau_{3n}$, which gives a combinatorial proof of
\begin{theorem}\label{T:decom}
For $n\ge 2$, the numbers $f^{\sigma_{3n-2}}$ and $f^{\tau_{3n}}$ satisfy
\begin{align}\label{E:rec1}
f^{\sigma_{3n-2}}&=\frac{1}{2n-1}\sum_{i=1}^{n-1}
\binom{3n-2}{3i-1}f^{\sigma_{3i-1}}
f^{\sigma_{3n-3i-1}}\\
\label{E:rec2}
f^{\tau_{3n}}&=\frac{1}{2n+1}\sum_{i=1}^{n-1}\binom{3n}{3i}
f^{\tau_{3i}}f^{\tau_{3n-3i}}.
\end{align}
\end{theorem}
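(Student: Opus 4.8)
The plan is to prove both recurrences by the same device: locate the extreme entry of a tableau on its fence poset, cut the fence at the diamond carrying it, and recognize the two induced sub-posets as smaller strip shapes. First I would orient myself by translating the two identities into differential recurrences. Writing $h_n=f^{\sigma_{3n-1}}/(3n-1)!$ and using eq.~\eqref{E:div1} in the form $f^{\sigma_{3n-2}}=2(3n-2)!\,h_n$, eq.~\eqref{E:rec1} becomes $2(2n-1)h_n=\sum_{i=1}^{n-1}h_ih_{n-i}$; with $H(x)=\sum_{n\ge1}h_nx^{2n-1}$ this is exactly the Riccati equation $2H'=1+H^2$ (the $+1$ coming from $h_1=\tfrac12$). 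Likewise, writing $p_n=f^{\tau_{3n}}/(3n)!$ and $Q(x)=\sum_{n\ge1}p_nx^{2n}$, eq.~\eqref{E:rec2} is equivalent to $(2n+1)p_n=\sum_{i=1}^{n-1}p_ip_{n-i}$, i.e. to $(xQ)'=Q^2+x^2$. This tells me precisely what a combinatorial proof must produce: a convolution on the labels together with a scalar $\tfrac1{2n-1}$ (resp. $\tfrac1{2n+1}$) that behaves like an integration constant.

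For eq.~\eqref{E:rec1} I would use the three-row description of $P_{\sigma_{3n-2}}$: minima $B_1,\dots,B_{n-1}$, a spine of middle vertices $M_0,\dots,M_{n-1}$, and maxima $T_1,\dots,T_{n-1}$, with $B_j$ and $T_j$ both comparable to $M_{j-1}$ and $M_j$, so the Hasse diagram is a chain of $n-1$ diamonds glued along the spine. Given a tableau, its largest entry $3n-2$ occupies some peak $T_j$. Cutting the two interface edges $B_j$--$M_j$ and $M_{j-1}$--$T_j$ severs the fence into a left block on $\{B_1,\dots,B_j,M_0,\dots,M_{j-1},T_1,\dots,T_{j-1}\}$ and a right block on the remaining $3(n-j)-1$ elements. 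After a left--right reflection the left block is isomorphic to $P_{\sigma_{3j-1}}$, and the right block is isomorphic to the dual poset $P_{\sigma_{3(n-j)-1}^{d}}$; invoking $f^{\sigma^{d}}=f^{\sigma}$ this yields the summand $\binom{3n-2}{3i-1}f^{\sigma_{3i-1}}f^{\sigma_{3(n-i)-1}}$ with $i=j$, the binomial recording which labels fall in the left block.

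The heart of the matter is the scalar $\tfrac1{2n-1}$, and this is the step I expect to be the main obstacle. Restricting a labeling to the two blocks forgets the two severed inequalities, so naive gluing is far from bijective: already for $n=2$ only two of the six label-splits reconstruct a genuine $\sigma_4$ tableau. I would therefore aim not to glue directly but to set up a $(2n-1)$-to-$1$ correspondence between the full set of split pairs and the $\sigma_{3n-2}$ tableaux. The $2n-1$ non-minimal vertices ($n$ on the spine plus $n-1$ peaks) are the natural index set, and I would try to extract the factor by a cycle-lemma mechanism: arrange the interface data cyclically and argue that among the $2n-1$ cyclic rotations exactly one repairs both severed inequalities and returns an honest tableau. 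Proving existence and uniqueness of that good rotation is the crux of the whole argument.

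For eq.~\eqref{E:rec2} the same scheme applies more cleanly, since $3n=3i+3(n-i)$ forces no change of vertex role: I would cut $P_{\tau_{3n}}$ at the \emph{valley} carrying the entry $1$, obtain two genuine $\tau$-shapes (one again via duality), and read off $\binom{3n}{3i}f^{\tau_{3i}}f^{\tau_{3(n-i)}}$, with the inhomogeneous term $x^2$ in $(xQ)'=Q^2+x^2$ and the base value $f^{\tau_3}=2$ accounting for the $i=0,n$ boundary. The scalar $\tfrac1{2n+1}$ would be supplied by the analogous cyclic argument over the $2n+1$ non-maximal vertices, so that the only genuinely new work beyond the $\sigma$ case is the same rotation lemma in dualized form.
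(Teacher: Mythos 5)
Your setup is sound: the generating-function translations are correct, the cut of the fence at the diamond carrying the extreme entry does sever $P_{\sigma_{3n-2}}$ into pieces of sizes $3j-1$ and $3(n-j)-1$ isomorphic (after reflection, resp.\ duality) to $P_{\sigma_{3j-1}}$ and $P_{\sigma_{3(n-j)-1}^{d}}$, and your $n=2$ count (six label-splits, two good ones) is right. But the argument has a genuine gap exactly where you flag it, and that gap is the entire theorem. You never construct the $(2n-1)$-to-$1$ correspondence: ``arrange the interface data cyclically'' is not a definition. A cycle-lemma argument here would require (i) an explicit $\mathbb{Z}_{2n-1}$-action on the union over $i$ of all label-splits --- and the orbits \emph{must} mix different values of $i$, since cutting a given tableau at its maximum produces a pair for one specific $i$ only; (ii) a proof that every orbit is free; (iii) a proof that every orbit contains exactly one pair satisfying both severed cover relations. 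None of these is supplied, and none is routine: already in your own $n=2$ example there is no evident natural $\mathbb{Z}_3$-action on the six choices of $\{B_1,M_0\}\subset[4]$ having exactly one good pair per orbit, so any such action must be built by hand, and building it is equivalent to proving eq.~(\ref{E:rec1}). As it stands you have a correct reformulation plus a conjecture, for both recursions.

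It is worth knowing that the paper extracts the scalar by a mechanism that is not cyclic at all, which both fills the gap and suggests the rotation symmetry you hope for may simply not exist. The paper classifies tableaux by the position $j$ of the \emph{minimum} (normalized via the reflection $\gamma$), and then \emph{inserts} an extra label $i\in[3n-1]$ next to $\min(T)$ of a tableau $T$ on $[3n-1]\setminus\{i\}$; the augmented object, which has $3n-1$ cells rather than $3n-2$, has one of two shapes, $\sigma_1^j$ or $\sigma_2^j$. Insertion gives $f^{\sigma_1^j}+f^{\sigma_2^j}=(3n-1)\,\vert\CMcal{T}_{\sigma_{3n-2}}^{1,j}([3n-2])\vert$; separate bijections (matching the $\sigma_2^j$-tableaux with ordinary or marked $\sigma_{3n-2}$-tableaux) give $\sum_j f^{\sigma_2^j}=\tfrac{n}{2}f^{\sigma_{3n-2}}$, hence $\sum_j f^{\sigma_1^j}=\tfrac{2n-1}{2}f^{\sigma_{3n-2}}$; and a gluing bijection --- essentially your cut, run in reverse --- gives $f^{\sigma_1^j}+f^{\sigma_1^{n-j}}=\binom{3n-2}{3j-1}f^{\sigma_{3j-1}}f^{\sigma_{3n-3j-1}}$. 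So the factor $2n-1$ arises \emph{additively}, as $(3n-1)-n$ in a double count of augmented objects, with the reflection $\gamma$ supplying the factor $\tfrac12$; no group of order $2n-1$ acts anywhere. To complete your proof you should replace the hoped-for cycle lemma by such an insertion double count; the same replacement is needed for eq.~(\ref{E:rec2}), where the paper additionally works with $\tau_{3n}^{d}$ so that every valley has two parents, and treats the endpoint-valley cases (your ``$i=0,n$ boundary'') separately.
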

\begin{proof}
For every element $a$ in the standard Young tableau $T_1$ of shape $\sigma_{3n-2}$, there are at most two elements that cover $a$ in the representation of $T_1$. We call an element $b$ the left (resp. right) parent of $a$, denoted by $p_{1,T}(a)$ (resp. $p_{2,T}(a)$), if $b$ covers $a$ and $b$ is to the left (resp. right) of $a$. We next define a reflection $\gamma$ that reverses each standard Young tableau $T_1$ left to right. Below we show the tableau $T_1$ and its mirror image $\gamma(T_1)$.
\begin{center}
\setlength{\unitlength}{3pt}
\begin{picture}(10,18)(10,22)
\put(-32,30){\line(1,1){6}}\put(-32,30){\circle*{0.5}}
\put(-26,36){\circle*{0.5}}\put(-26,36){\line(1,-1){6}}
\put(-20,30){\circle*{0.5}}\put(-20,30){\line(-1,-1){6}}
\put(-26,24){\circle*{0.5}}\put(-26,24){\line(-1,1){6}}
%%%%%%%%%%%%%%%%
\put(-34,30){\circle*{0.5}}\put(-36,30){\circle*{0.5}}
\put(-20,30){\line(1,1){6}}\put(-14,36){\circle*{0.5}}
\put(-14,36){\line(1,-1){6}}\put(-8,30){\circle*{0.5}}
\put(-8,30){\line(-1,-1){6}}\put(-14,24){\circle*{0.5}}
\put(-14,24){\line(-1,1){6}}
%%%%%%%%%%%%%%%%
\put(-8,30){\line(1,1){6}}\put(-2,36){\circle*{0.5}}
\put(-2,36){\line(1,-1){6}}\put(4,30){\circle*{0.5}}
\put(4,30){\line(-1,-1){6}}\put(-2,24){\circle*{0.5}}
\put(-2,24){\line(-1,1){6}}\put(6,30){\circle*{0.5}}
\put(8,30){\circle*{0.5}}
%%%%%%%%%%%%%%%%
\put(11,20){\line(0,1){20}}
%%%%%%%%%%%%%%%%
\put(-27,21){\small{$x_1$}}\put(-15,21){\small{$x_2$}}
\put(-3,21){\small{$x_3$}}
\put(3,27){\small{$x_7$}}
\put(-21,27){\small{$x_5$}}\put(-33,27){\small{$x_4$}}
\put(-9,27){\small{$x_6$}}
\put(-27,37){\small{$x_8$}}\put(-15,37){\small{$x_9$}}
\put(-3,37){\small{$x_{10}$}}\put(-35,38){\small{$T_1$}:}
%%%%%%%%%%%%%%%%
\put(18,30){\line(1,1){6}}\put(18,30){\circle*{0.5}}
\put(24,36){\circle*{0.5}}\put(24,36){\line(1,-1){6}}
\put(30,30){\circle*{0.5}}\put(30,30){\line(-1,-1){6}}
\put(24,24){\circle*{0.5}}\put(24,24){\line(-1,1){6}}
%%%%%%%%%%%%%%%%
\put(16,30){\circle*{0.5}}\put(14,30){\circle*{0.5}}
\put(30,30){\line(1,1){6}}\put(36,36){\circle*{0.5}}
\put(36,36){\line(1,-1){6}}\put(42,30){\circle*{0.5}}
\put(42,30){\line(-1,-1){6}}\put(36,24){\circle*{0.5}}
\put(36,24){\line(-1,1){6}}
%%%%%%%%%%%%%%%%
\put(42,30){\line(1,1){6}}\put(48,36){\circle*{0.5}}
\put(48,36){\line(1,-1){6}}\put(54,30){\circle*{0.5}}
\put(54,30){\line(-1,-1){6}}\put(48,24){\circle*{0.5}}
\put(48,24){\line(-1,1){6}}\put(56,30){\circle*{0.5}}
\put(58,30){\circle*{0.5}}
%%%%%%%%%%%%%%%%
\put(23,21){\small{$x_3$}}\put(35,21){\small{$x_2$}}
\put(47,21){\small{$x_1$}}
\put(53,27){\small{$x_4$}}
\put(29,27){\small{$x_6$}}\put(17,27){\small{$x_7$}}
\put(41,27){\small{$x_5$}}
\put(23,37){\small{$x_{10}$}}\put(35,37){\small{$x_9$}}
\put(47,37){\small{$x_8$}}\put(12,38){\small{$\gamma(T_1)$}:}
\end{picture}
\end{center}
For $i=0,1,2$, let $\sigma_{3n-i}^{\gamma}$ (resp. $\tau_{3n}^{\gamma}$) be the shape obtained by reversing the shape $\sigma_{3n-i}$ (resp. $\tau_{3n}$) left to right, clearly $\sigma_{3n-2}^{\gamma}=\sigma_{3n-2}$, $\tau_{3n}^{\gamma}=\tau_{3n}$ and $f^{\sigma_{3n-i}^{\gamma}}=f^{\sigma_{3n-i}}$. Let furthermore $\sigma_{3n-i}^{\gamma,d}$ be the shape obtained by reversing the shape $\sigma_{3n-i}^d$ left to right, then $f^{\sigma_{3n-i}^{\gamma,d}}=f^{\sigma_{3n-i}}$. For the tableau $T_1$ of shape $\sigma_{3n-2}$, the tableau $\gamma(T_1)$ has shape $\sigma_{3n-2}^{\gamma}$. The minimal element of $T_1$ is contained in the bottom row, which is the row containing $x_1,x_2,x_3$, see the picture above. We use $\min(T_1)$ to denote the minimal element of tableau $T_1$. For any subset $I=\{x_1,\ldots,x_{3n-2}\}$ of $\mathbb{N}$, let $\CMcal{T}_{\sigma_{3n-2}}^{1,j}(I)$ (resp. $\CMcal{T}_{\sigma_{3n-2}}^{2,j}(I)$) be the set of standard Young tableau $T_1$ on the set $I$ satisfying
\begin{enumerate}
\item the left parent of $\min(T_1)$ is larger (resp. smaller) than the right parent of $\min(T_1)$,
\item $\min(T_1)$ is the $j$-th element on the bottom row of $T_1$ from left to right.
\end{enumerate}
Then from the reflection $\gamma$ we can easily see
\begin{equation}\label{E:T}
\vert\CMcal{T}_{\sigma_{3n-2}}^{1,j}([3n-2])\vert
=\vert\CMcal{T}_{\sigma_{3n-2}}^{2,n-j}([3n-2])\vert\quad\mbox{and}\quad \sum_{j=1}^{n-1}\vert\CMcal{T}_{\sigma_{3n-2}}^{1,j}([3n-2])\vert=\frac{1}{2}f^{\sigma_{3n-2}}.
\end{equation}
Given a pair $(T,i)$ where $i\in [3n-1]$ and $T\in \CMcal{T}_{\sigma_{3n-2}}^{1,j}([3n-1]-\{i\})$, we can represent $T$ as
\begin{center}
\setlength{\unitlength}{3pt}
\begin{picture}(40,16)(-25,22)
\put(-32,30){\line(1,1){6}}\put(-32,30){\circle*{0.5}}
\put(-26,36){\circle*{0.5}}\put(-26,36){\line(1,-1){6}}
\put(-20,30){\circle*{0.5}}\put(-20,30){\line(-1,-1){6}}
\put(-26,24){\circle*{0.5}}\put(-26,24){\line(-1,1){6}}
\put(-34,30){\circle*{0.5}}\put(-36,30){\circle*{0.5}}
\put(-38,30){\circle*{0.5}}
\put(-20,30){\line(1,0){1}}\put(-18,30){\line(1,0){1}}
\put(-16,30){\line(1,0){1}}\put(-14,30){\line(1,0){1}}
\put(-12,30){\line(1,0){1}}\put(-10,30){\line(1,0){1}}
\put(-15,28){\small{$>$}}
%%%%%%%%%%%%%%%
\put(-20,30){\line(1,1){6}}\put(-14,36){\circle*{0.5}}
\put(-14,36){\line(1,-1){6}}\put(-8,30){\circle*{0.5}}
\put(-8,30){\line(-1,-1){6}}\put(-14,24){\circle*{0.5}}
\put(-14,24){\line(-1,1){6}}
%%%%%%%%%%%%%%%
\put(-20.5,27){\small{$c$}}\put(-17,21){\small{$\min(T)$}}
\put(-8.5,27){\small{$a$}}\put(-27,21){\small{$x_1$}}
\put(-3,21){\small{$x_2$}}\put(9,21){\small{$x_3$}}
\put(-33,27){\small{$x_4$}}\put(3,27){\small{$x_5$}}
\put(15,27){\small{$x_6$}}\put(-27,37){\small{$x_7$}}
\put(-15,37){\small{$x_8$}}\put(-3,37){\small{$x_9$}}
\put(9,37){\small{$x_{10}$}}
%%%%%%%%%%%%%%%%
\put(-8,30){\line(1,1){6}}\put(-2,36){\circle*{0.5}}
\put(-2,36){\line(1,-1){6}}\put(4,30){\circle*{0.5}}
\put(4,30){\line(-1,-1){6}}\put(-2,24){\circle*{0.5}}
\put(-2,24){\line(-1,1){6}}
%%%%%%%%%%%%%%%%
\put(4,30){\line(1,1){6}}\put(10,36){\circle*{0.5}}
\put(10,36){\line(1,-1){6}}\put(16,30){\circle*{0.5}}
\put(16,30){\line(-1,-1){6}}\put(10,24){\circle*{0.5}}
\put(10,24){\line(-1,1){6}}\put(18,30){\circle*{0.5}}
\put(20,30){\circle*{0.5}}\put(22,30){\circle*{0.5}}
%%%%%%%%%%%%%%%%
\end{picture}
\end{center}
where $c>a$. Let furthermore $\sigma_{3n-2}^{1,j}$ be the shape of $T$ represented by the Hasse diagram
\begin{center}
\setlength{\unitlength}{3pt}
\begin{picture}(40,17)(-25,18)
\put(-32,30){\line(1,1){6}}\put(-32,30){\circle*{0.5}}
\put(-26,36){\circle*{0.5}}\put(-26,36){\line(1,-1){6}}
\put(-20,30){\circle*{0.5}}\put(-20,30){\line(-1,-1){6}}
\put(-26,24){\circle*{0.5}}\put(-26,24){\line(-1,1){6}}
\put(-34,30){\circle*{0.5}}\put(-36,30){\circle*{0.5}}
\put(-38,30){\circle*{0.5}}\put(-20,30){\line(1,-1){6}}
\put(-14,24){\circle*{0.5}}\put(-20,30){\line(0,1){6}}
\put(-20,36){\circle*{0.5}}\put(-14,24){\line(0,-1){6}}
\put(-14,18){\circle*{0.5}}\put(-14,24){\line(1,1){6}}
\put(-14,24){\line(1,-1){6}}\put(-8,30){\circle*{0.5}}
\put(-8,18){\circle*{0.5}}\put(-16,15){\small{$\min$}}
\put(-15.2,17){$\square$}\put(-8,30){\line(1,-1){6}}
\put(-2,24){\circle*{0.5}}\put(-2,24){\line(-1,-1){6}}
\put(-2,24){\line(1,1){6}}\put(4,30){\circle*{0.5}}
\put(4,30){\line(1,-1){6}}\put(10,24){\circle*{0.5}}
\put(10,24){\line(-1,-1){6}}\put(4,18){\circle*{0.5}}
\put(4,18){\line(-1,1){6}}\put(12,24){\circle*{0.5}}
\put(14,24){\circle*{0.5}}\put(16,24){\circle*{0.5}}
\end{picture}
\end{center}
where we use $\square$ to emphasize the location of the minimal element of shape $\sigma_{3n-2}^{1,j}$ is fixed by the assumption on $T$. We will construct the bijection $(T,i)\mapsto g(T,i)$ by comparing the values of $i$ and $a$. If $i>a$, then we let $i$ cover $a$ in the new tableau $g(T,i)$. In this case we notice $\min(g(T,i))=\min(T)=1$ and the tableau $g(T,i)$ can be represented as
\begin{center}
\setlength{\unitlength}{3pt}
\begin{picture}(40,16)(-25,22)
\put(-32,30){\line(1,1){6}}\put(-32,30){\circle*{0.5}}
\put(-26,36){\circle*{0.5}}\put(-26,36){\line(1,-1){6}}
\put(-20,30){\circle*{0.5}}\put(-20,30){\line(-1,-1){6}}
\put(-26,24){\circle*{0.5}}\put(-26,24){\line(-1,1){6}}
\put(-34,30){\circle*{0.5}}\put(-36,30){\circle*{0.5}}
\put(-38,30){\circle*{0.5}}\put(-52,30){\small{$g(T,i)$}:}
\put(-20,30){\line(1,0){1}}\put(-18,30){\line(1,0){1}}
\put(-16,30){\line(1,0){1}}\put(-14,30){\line(1,0){1}}
\put(-12,30){\line(1,0){1}}\put(-10,30){\line(1,0){1}}
\put(-15,28){\small{$>$}}
%%%%%%%%%%%%%%%
\put(-20,30){\line(1,1){6}}\put(-14,36){\circle*{0.5}}
\put(-14,36){\line(1,-1){6}}\put(-8,30){\circle*{0.5}}
\put(-8,30){\line(-1,-1){6}}\put(-14,24){\circle*{0.5}}
\put(-14,24){\line(-1,1){6}}
%%%%%%%%%%%%%%%
\put(-20.5,27){\small{$c$}}\put(-14.5,21){\small{$1$}}
\put(-8.5,27){\small{$a$}}\put(-8,30){\line(0,1){6}}
\put(-8,36){\circle*{0.5}}\put(-7,36){\small{$i$}}
\put(-27,21){\small{$x_1$}}
\put(-3,21){\small{$x_2$}}\put(9,21){\small{$x_3$}}
\put(-33,27){\small{$x_4$}}\put(3,27){\small{$x_5$}}
\put(15,27){\small{$x_6$}}\put(-27,37){\small{$x_7$}}
\put(-15,37){\small{$x_8$}}\put(-3,37){\small{$x_9$}}
\put(9,37){\small{$x_{10}$}}
%%%%%%%%%%%%%%%%
\put(-8,30){\line(1,1){6}}\put(-2,36){\circle*{0.5}}
\put(-2,36){\line(1,-1){6}}\put(4,30){\circle*{0.5}}
\put(4,30){\line(-1,-1){6}}\put(-2,24){\circle*{0.5}}
\put(-2,24){\line(-1,1){6}}
%%%%%%%%%%%%%%%%
\put(4,30){\line(1,1){6}}\put(10,36){\circle*{0.5}}
\put(10,36){\line(1,-1){6}}\put(16,30){\circle*{0.5}}
\put(16,30){\line(-1,-1){6}}\put(10,24){\circle*{0.5}}
\put(10,24){\line(-1,1){6}}\put(18,30){\circle*{0.5}}
\put(20,30){\circle*{0.5}}\put(22,30){\circle*{0.5}}
%%%%%%%%%%%%%%%%
\end{picture}
\end{center}
Let $\sigma_{1}^{j}$ be the shape of $g(T,i)$ under the condition $i>a$, represented by the Hasse diagram
\begin{center}
\setlength{\unitlength}{3pt}
\begin{picture}(40,16)(-25,18)
\put(-32,30){\line(1,1){6}}\put(-32,30){\circle*{0.5}}
\put(-26,36){\circle*{0.5}}\put(-26,36){\line(1,-1){6}}
\put(-20,30){\circle*{0.5}}\put(-20,30){\line(-1,-1){6}}
\put(-26,24){\circle*{0.5}}\put(-26,24){\line(-1,1){6}}
\put(-34,30){\circle*{0.5}}\put(-36,30){\circle*{0.5}}
\put(-38,30){\circle*{0.5}}\put(-20,30){\line(1,-1){6}}
\put(-14,24){\circle*{0.5}}\put(-20,30){\line(0,1){6}}
\put(-20,36){\circle*{0.5}}\put(-14,24){\line(0,-1){6}}
\put(-14,18){\circle*{0.5}}\put(-14,24){\line(1,1){6}}
\put(-14,24){\line(1,-1){6}}\put(-8,30){\circle*{0.5}}
\put(-8,18){\circle*{0.5}}\put(-16,15){\small{$\min$}}
\put(-15.2,17){$\square$}\put(-8,30){\line(1,-1){6}}
\put(-2,24){\circle*{0.5}}\put(-2,24){\line(-1,-1){6}}
\put(-2,24){\line(1,1){6}}\put(4,30){\circle*{0.5}}
\put(4,30){\line(1,-1){6}}\put(10,24){\circle*{0.5}}
\put(10,24){\line(-1,-1){6}}\put(4,18){\circle*{0.5}}
\put(4,18){\line(-1,1){6}}\put(12,24){\circle*{0.5}}
\put(14,24){\circle*{0.5}}\put(16,24){\circle*{0.5}}
\put(-14,24){\line(0,1){6}}\put(-14,30){\circle*{0.5}}
\end{picture}
\end{center}
where the location of the minimal element of shape $\sigma_{1}^{j}$ is fixed by the assumption on $T$. If $i<a$, then we let $a$ cover $i$ in the new tableau $g(T,i)$ and the tableau $g(T,i)$ can be
represented as
\begin{center}
\setlength{\unitlength}{3pt}
\begin{picture}(40,20)(-25,18)
\put(-32,30){\line(1,1){6}}\put(-32,30){\circle*{0.5}}
\put(-26,36){\circle*{0.5}}\put(-26,36){\line(1,-1){6}}
\put(-20,30){\circle*{0.5}}\put(-20,30){\line(-1,-1){6}}
\put(-26,24){\circle*{0.5}}\put(-26,24){\line(-1,1){6}}
\put(-34,30){\circle*{0.5}}\put(-36,30){\circle*{0.5}}
\put(-38,30){\circle*{0.5}}\put(-52,30){\small{$g(T,i)$}:}
\put(-20,30){\line(1,0){1}}\put(-18,30){\line(1,0){1}}
\put(-16,30){\line(1,0){1}}\put(-14,30){\line(1,0){1}}
\put(-12,30){\line(1,0){1}}\put(-10,30){\line(1,0){1}}
\put(-15,31){\small{$>$}}
%%%%%%%%%%%%%%%
\put(-20,30){\line(1,1){6}}\put(-14,36){\circle*{0.5}}
\put(-14,36){\line(1,-1){6}}\put(-8,30){\circle*{0.5}}
\put(-8,30){\line(-1,-1){6}}\put(-14,24){\circle*{0.5}}
\put(-14,24){\line(-1,1){6}}
%%%%%%%%%%%%%%%
\put(-18,16){Figure $1$}
\put(-20.5,32){\small{$c$}}\put(-18,21){\small{$\min(T)$}}
\put(-8.5,32){\small{$a$}}\put(-8,30){\line(0,-1){6}}
\put(-8,24){\circle*{0.5}}\put(-7,22){\small{$i$}}
\put(-27,21){\small{$x_1$}}
\put(-3,21){\small{$x_2$}}\put(9,21){\small{$x_3$}}
\put(-33,27){\small{$x_4$}}\put(3,27){\small{$x_5$}}
\put(15,27){\small{$x_6$}}\put(-27,37){\small{$x_7$}}
\put(-15,37){\small{$x_8$}}\put(-3,37){\small{$x_9$}}
\put(9,37){\small{$x_{10}$}}
%%%%%%%%%%%%%%%%
\put(-8,30){\line(1,1){6}}\put(-2,36){\circle*{0.5}}
\put(-2,36){\line(1,-1){6}}\put(4,30){\circle*{0.5}}
\put(4,30){\line(-1,-1){6}}\put(-2,24){\circle*{0.5}}
\put(-2,24){\line(-1,1){6}}
%%%%%%%%%%%%%%%%
\put(4,30){\line(1,1){6}}\put(10,36){\circle*{0.5}}
\put(10,36){\line(1,-1){6}}\put(16,30){\circle*{0.5}}
\put(16,30){\line(-1,-1){6}}\put(10,24){\circle*{0.5}}
\put(10,24){\line(-1,1){6}}\put(18,30){\circle*{0.5}}
\put(20,30){\circle*{0.5}}\put(22,30){\circle*{0.5}}
%%%%%%%%%%%%%%%%
\end{picture}
\end{center}
Let $\sigma_{2}^{j}$ be the shape of $g(T,i)$ under the condition $i<a$, represented by
\begin{center}
\setlength{\unitlength}{3pt}
\begin{picture}(40,19)(-25,16)
\put(-32,30){\line(1,1){6}}\put(-32,30){\circle*{0.5}}
\put(-26,36){\circle*{0.5}}\put(-26,36){\line(1,-1){6}}
\put(-20,30){\circle*{0.5}}\put(-20,30){\line(-1,-1){6}}
\put(-26,24){\circle*{0.5}}\put(-26,24){\line(-1,1){6}}
\put(-34,30){\circle*{0.5}}\put(-36,30){\circle*{0.5}}
\put(-38,30){\circle*{0.5}}\put(-20,30){\line(1,-1){6}}
\put(-14,24){\circle*{0.5}}\put(-20,30){\line(0,1){6}}
\put(-20,36){\circle*{0.5}}\put(-14,24){\line(0,-1){6}}
\put(-14,18){\circle*{0.5}}\put(-14,24){\line(1,1){6}}
\put(-14,24){\line(1,-1){6}}\put(-8,30){\circle*{0.5}}
\put(-8,18){\circle*{0.5}}\put(-8,30){\line(1,-1){6}}
\put(-2,24){\circle*{0.5}}\put(-2,24){\line(-1,-1){6}}
\put(-2,24){\line(1,1){6}}\put(4,30){\circle*{0.5}}
\put(4,30){\line(1,-1){6}}\put(10,24){\circle*{0.5}}
\put(10,24){\line(-1,-1){6}}\put(4,18){\circle*{0.5}}
\put(4,18){\line(-1,1){6}}\put(12,24){\circle*{0.5}}
\put(14,24){\circle*{0.5}}\put(16,24){\circle*{0.5}}
\put(-14,24){\line(-1,-1){6}}\put(-20,18){\circle*{0.5}}
\put(-15.2,17){$\square$}\put(-21.2,17){$\square$}
\put(-19,14){\small{$\min$}}
\end{picture}
\end{center}
where only two elements in $\square$ can be the minimal element of the shape $\sigma_{2}^{j}$, namely for the tableau $g(T,i)$ of shape $\sigma_{2}^{j}$, either $\min(g(T,i))=\min(T)$ or $\min(g(T,i))=i$, see Figure $1$. For $i=1,2$, we use
$f^{\sigma_i^{j}}$ to represent the number of natural labelings on the poset $P_{\sigma_i^{j}}$ of shape $\sigma_i^{j}$. We shall next prove
\begin{eqnarray}\label{E:fsig2}
\sum_{j=1}^{n-1}f^{\sigma_1^{j}}=(n-\frac{1}{2})f^{\sigma_{3n-2}}.
\end{eqnarray}
For the tableau $g(T,i)$ of shape $\sigma_{2}^{j}$, either $\min(g(T,i))=\min(T)$ or $\min(g(T,i))=i$, see Figure $1$. If $i<\min(T)$, then $i=\min(g(T,i))=1$ and $\min(T)=2$. By removing $i$ and replacing every element $m$ by $m-1$ from $g(T,i)$, we obtain a standard Young tableau from  $\CMcal{T}_{\sigma_{3n-2}}^{1,j}([3n-2])$, namely the tableau $g(T,i)$ of shape $\sigma_{2}^{j}$ under the condition $i<\min(T)$ is uniquely corresponding to a tableau from $\CMcal{T}_{\sigma_{3n-2}}^{1,j}([3n-2])$. Next we shall show the tableau $g(T,i)$ of shape $\sigma_{2}^{j}$ under the condition $i>\min(T)$ is uniquely corresponding to a tableau from $\CMcal{M}_{\sigma_{3n-2}}^{1,j}$.
Let $\CMcal{M}_{\sigma_{3n-2}}^{1,j}$ (resp. $\CMcal{M}_{\sigma_{3n-2}}^{2,j}$) be the set of standard Young tableau $T_1$ on the set $[n]$ satisfying
\begin{enumerate}
\item the $j$-th element $i'$ on the bottom row of $T_1$ is colored,
\item the left parent of $i'$ is larger (resp. smaller) than the right parent of $i'$.
\end{enumerate}
By the reflection $\gamma$ and the definition, it is clear
\begin{eqnarray}\label{E:M}
\vert \CMcal{M}_{\sigma_{3n-2}}^{1,j}\vert=\vert \CMcal{M}_{\sigma_{3n-2}}^{2,n-j}\vert
\quad\mbox{and}\quad \vert \CMcal{M}_{\sigma_{3n-2}}^{1,j}\vert+\vert \CMcal{M}_{\sigma_{3n-2}}^{2,j}\vert=f^{\sigma_{3n-2}}.
\end{eqnarray}
For the tableau $g(T,i)$ of shape $\sigma_{2}^{j}$, see Figure $1$, if $i>\min(T)$, then $\min(g(T,i))=\min(T)=1$ and $c>a>i$. We remove $\min(T)$ from the tableau $g(T,i)$ and connect $i$ with $c$, next we replace every element $m$ by $m-1$, which gives us
\begin{center}
\setlength{\unitlength}{3pt}
\begin{picture}(40,15)(-25,22)
\put(-32,30){\line(1,1){6}}\put(-32,30){\circle*{0.5}}
\put(-26,36){\circle*{0.5}}\put(-26,36){\line(1,-1){6}}
\put(-20,30){\circle*{0.5}}\put(-20,30){\line(-1,-1){6}}
\put(-26,24){\circle*{0.5}}\put(-26,24){\line(-1,1){6}}
\put(-34,30){\circle*{0.5}}\put(-36,30){\circle*{0.5}}
\put(-38,30){\circle*{0.5}}\put(-20,30){\line(1,0){1}}
\put(-18,30){\line(1,0){1}}\put(-16,30){\line(1,0){1}}
\put(-14,30){\line(1,0){1}}\put(-12,30){\line(1,0){1}}
\put(-10,30){\line(1,0){1}}\put(-15,31){\small{$>$}}
%%%%%%%%%%%%%%%
\put(-20,30){\line(1,1){6}}\put(-14,36){\circle*{0.5}}
\put(-14,36){\line(1,-1){6}}\put(-8,30){\circle*{0.5}}
\put(-8,30){\line(-1,-1){6}}\put(-14,24){\circle*{0.5}}
\put(-14,24){\line(-1,1){6}}
%%%%%%%%%%%%%%%
\put(-20.5,32){\small{$c'$}}\put(-15,21){\small{$i'$}}
\put(-8.5,32){\small{$a'$}}
%%%%%%%%%%%%%%%%
\put(-8,30){\line(1,1){6}}\put(-2,36){\circle*{0.5}}
\put(-2,36){\line(1,-1){6}}\put(4,30){\circle*{0.5}}
\put(4,30){\line(-1,-1){6}}\put(-2,24){\circle*{0.5}}
\put(-2,24){\line(-1,1){6}}
%%%%%%%%%%%%%%%%
\put(4,30){\line(1,1){6}}\put(10,36){\circle*{0.5}}
\put(10,36){\line(1,-1){6}}\put(16,30){\circle*{0.5}}
\put(16,30){\line(-1,-1){6}}\put(10,24){\circle*{0.5}}
\put(10,24){\line(-1,1){6}}\put(18,30){\circle*{0.5}}
\put(20,30){\circle*{0.5}}\put(22,30){\circle*{0.5}}
\put(-27,21){\small{$x_1'$}}
\put(-3,21){\small{$x_2'$}}\put(9,21){\small{$x_3'$}}
\put(-34,27){\small{$x_4'$}}\put(2,27){\small{$x_5'$}}
\put(15,27){\small{$x_6'$}}\put(-27,37){\small{$x_7'$}}
\put(-15,37){\small{$x_8'$}}\put(-3,37){\small{$x_9'$}}
\put(9,37){\small{$x_{10}'$}}
%%%%%%%%%%%%%%%%
\end{picture}
\end{center}
where $i'=i-1$, $c'=c-1$, $a'=a-1$, $x_m'=x_m-1$ for every $m$ and $i'$ is not necessary to be the minimal element $1$. We color the element $i'$ in the above tableau. Therefore the tableau $g(T,i)$ under the condition $i>\min(T)$, is uniquely corresponding to a tableau from $\CMcal{M}_{\sigma_{3n-2}}^{1,j}$. In combination of these two cases $i<\min(T)$ and $i>\min(T)$, we conclude
\begin{eqnarray*}
f^{\sigma_2^j}=\vert \CMcal{T}_{\sigma_{3n-2}}^{1,j}([3n-2])\vert+\vert\CMcal{M}_{\sigma_{3n-2}}^{1,j}\vert.
\end{eqnarray*}
In view of eq.~(\ref{E:T}) and eq.~(\ref{E:M}), we have
\begin{align*}
\sum_{j=1}^{n-1}f^{\sigma_2^j}
&=\sum_{j=1}^{n-1}\vert \CMcal{T}_{\sigma_{3n-2}}^{1,j}([3n-2])\vert
+\frac{1}{2}\sum_{j=1}^{n-1}(\vert\CMcal{M}_{\sigma_{3n-2}}^{1,j}\vert
+\vert\CMcal{M}_{\sigma_{3n-2}}^{2,n-j}\vert)\\
&=\sum_{j=1}^{n-1}\vert \CMcal{T}_{\sigma_{3n-2}}^{1,j}([3n-2])\vert
+\frac{1}{2}\sum_{j=1}^{n-1}(\vert\CMcal{M}_{\sigma_{3n-2}}^{1,j}\vert
+\vert\CMcal{M}_{\sigma_{3n-2}}^{2,j}\vert)\\
&=\frac{1}{2}f^{\sigma_{3n-2}}+\frac{1}{2}(n-1)f^{\sigma_{3n-2}}=\frac{1}{2}nf^{\sigma_{3n-2}}.
\end{align*}
Furthermore, by inserting $i$ to $T$ where $i\in [3n-1]$ and $T\in\CMcal{T}_{\sigma_{3n-2}}^{1,j}([3n-1]-\{i\})$, we get
$$f^{\sigma_1^{j}}+f^{\sigma_2^{j}}=(3n-1)\vert\CMcal{T}_{\sigma_{3n-2}}^{1,j}([3n-2])\vert,$$
from which it follows
$$\sum_{j=1}^{n-1}(f^{\sigma_1^{j}}+f^{\sigma_2^{j}})=\frac{1}{2}(3n-1)f^{\sigma_{3n-2}}$$
and therefore eq.~(\ref{E:fsig2}) is true. Now it remains to prove
\begin{eqnarray}\label{E:fsig1}
f^{\sigma_1^{j}}+f^{\sigma_1^{n-j}}=\binom{3n-2}{3j-1}f^{\sigma_{3j-1}}f^{\sigma_{3n-3j-1}}.
\end{eqnarray}
For a given tableau $T_{1,1}$ of shape $\sigma_{3j-1}^{\gamma,d}$ and a tableau $T_{1,2}$ of shape $\sigma_{3n-3j-1}^{d}$, we can represent $T_{1,1}$ and $T_{1,2}$ as below:
\begin{center}
\setlength{\unitlength}{3pt}
\begin{picture}(40,15)(-25,22.5)
\put(-32,30){\line(1,1){6}}\put(-32,30){\circle*{0.5}}
\put(-26,36){\circle*{0.5}}\put(-26,36){\line(1,-1){6}}
\put(-20,30){\circle*{0.5}}\put(-20,30){\line(-1,-1){6}}
\put(-26,24){\circle*{0.5}}\put(-26,24){\line(-1,1){6}}
\put(-34,30){\circle*{0.5}}\put(-36,30){\circle*{0.5}}
\put(-38,30){\circle*{0.5}}\put(-50,30){\small{$T_{1,1}$}:}
\put(-20,30){\line(0,1){6}}\put(-20,36){\circle*{0.5}}
\put(0,30){\circle*{0.5}}
%%%%%%%%%%%%%%%
%%%%%%%%%%%%%%%
\put(-33.5,26){\small{$x_4'$}}
\put(-21.5,27){\small{$c'$}}\put(-2,27){\small{$a'$}}
\put(-12,30){\small{$T_{1,2}$}:}
\put(0,30){\line(0,1){6}}\put(0,36){\circle*{0.5}}
\put(-21,37){\small{$x_8'$}}
\put(-27,37){\small{$x_7'$}}
\put(-1,37){\small{$i'$}}\put(-27,21){\small{$x_1'$}}
\put(5,37){\small{$x_9'$}}\put(17,37){\small{$x_{10}'$}}
\put(5,21.5){\small{$x_2'$}}\put(17,21.5){\small{$x_3'$}}
\put(11,26){\small{$x_5'$}}
\put(23,27){\small{$x_6'$}}
%%%%%%%%%%%%%%%%
\put(0,30){\line(1,1){6}}\put(6,36){\circle*{0.5}}
\put(6,36){\line(1,-1){6}}\put(12,30){\circle*{0.5}}
\put(12,30){\line(-1,-1){6}}\put(6,24){\circle*{0.5}}
\put(6,24){\line(-1,1){6}}
%%%%%%%%%%%%%%%%
\put(12,30){\line(1,1){6}}\put(18,36){\circle*{0.5}}
\put(18,36){\line(1,-1){6}}\put(24,30){\circle*{0.5}}
\put(24,30){\line(-1,-1){6}}\put(18,24){\circle*{0.5}}
\put(18,24){\line(-1,1){6}}\put(26,30){\circle*{0.5}}
\put(28,30){\circle*{0.5}}\put(30,30){\circle*{0.5}}
%%%%%%%%%%%%%%%%
\end{picture}
\end{center}
We shall construct a bijection $(T_{1,1},T_{1,2})\mapsto (T_2',T_3')$ where $T_2'$ is a tableau of shape $\sigma_1^j$ and $T_3'$ is a tableau of shape $\sigma_{1}^{n-j}$. If $c'>a'$ between the tableaux $T_{1,1}$ and $T_{1,2}$, then we let $c'$ cover $a'$ and obtain a tableau $T_2$, which is
\begin{center}
\setlength{\unitlength}{3pt}
\begin{picture}(40,20)(-25,17.5)
\put(-32,30){\line(1,1){6}}\put(-32,30){\circle*{0.5}}
\put(-26,36){\circle*{0.5}}\put(-26,36){\line(1,-1){6}}
\put(-20,30){\circle*{0.5}}\put(-20,30){\line(-1,-1){6}}
\put(-26,24){\circle*{0.5}}\put(-26,24){\line(-1,1){6}}
\put(-34,30){\circle*{0.5}}\put(-36,30){\circle*{0.5}}
\put(-38,30){\circle*{0.5}}\put(-50,30){\small{$T_2$}:}
\put(-20,30){\line(0,1){6}}\put(-20,36){\circle*{0.5}}
\put(-20,30){\line(1,-1){6}}\put(-14,24){\circle*{0.5}}
%%%%%%%%%%%%%%%
%%%%%%%%%%%%%%%
\put(-21.5,26){\small{$c'$}}\put(-16,21){\small{$a'$}}
\put(-14,24){\line(0,1){6}}\put(-14,30){\circle*{0.5}}
\put(-21,37){\small{$x_8'$}}
\put(-15,31){\small{$i'$}}
%%%%%%%%%%%%%%%%
\put(-14,24){\line(1,1){6}}\put(-8,30){\circle*{0.5}}
\put(-8,30){\line(1,-1){6}}\put(-2,24){\circle*{0.5}}
\put(-2,24){\line(-1,-1){6}}\put(-8,18){\circle*{0.5}}
\put(-8,18){\line(-1,1){6}}
%%%%%%%%%%%%%%%%
\put(-2,24){\line(1,1){6}}\put(4,30){\circle*{0.5}}
\put(4,30){\line(1,-1){6}}\put(10,24){\circle*{0.5}}
\put(10,24){\line(-1,-1){6}}\put(4,18){\circle*{0.5}}
\put(4,18){\line(-1,1){6}}\put(12,24){\circle*{0.5}}
\put(14,24){\circle*{0.5}}\put(16,24){\circle*{0.5}}
\put(-27,21){\small{$x_1'$}}
\put(-9,15.5){\small{$x_2'$}}\put(3,15.5){\small{$x_3'$}}
\put(-33,27){\small{$x_4'$}}\put(-3,21){\small{$x_5'$}}
\put(9,21){\small{$x_6'$}}\put(-27,37){\small{$x_7'$}}
\put(-9,31){\small{$x_9'$}}
\put(3,31){\small{$x_{10}'$}}
%%%%%%%%%%%%%%%%
\end{picture}
\end{center}
The tableau $T_2$ is uniquely corresponding to a tableau of shape $\sigma_{1}^{j}$. Let $T_2'$ be the tableau obtained from $T_2$ by replacing every element $m$ in $T_2$ by $m+1$, and then let the element $(a'+1)$ cover $1$ in the diagram. Clearly $T_2'$ is a tableau of shape $\sigma_{1}^{j}$ on the set $[3n-2]$.

If $c'<a'$ between the tableaux $T_{1,1}$ and $T_{1,2}$, then we let $a'$ cover $c'$ and reverse the whole diagram left to right. This gives us a tableau $T_{3}$, which is
\begin{center}
\setlength{\unitlength}{3pt}
\begin{picture}(80,19.5)(-25,12)
\put(-16,21){\small{$x_6'$}}\put(-14,24){\circle*{0.5}}
\put(10,24){\line(0,1){6}}\put(10,30){\circle*{0.5}}
\put(-16,24){\circle*{0.5}}\put(-18,24){\circle*{0.5}}
\put(-20,24){\circle*{0.5}}\put(-30,24){\small{$T_{3}$}:}
%%%%%%%%%%%%%%%%
\put(-14,24){\line(1,1){6}}\put(-8,30){\circle*{0.5}}
\put(-8,30){\line(1,-1){6}}\put(-2,24){\circle*{0.5}}
\put(-2,24){\line(-1,-1){6}}\put(-8,18){\circle*{0.5}}
\put(-8,18){\line(-1,1){6}}
%%%%%%%%%%%%%%%%
\put(-2,24){\line(1,1){6}}\put(4,30){\circle*{0.5}}
\put(4,30){\line(1,-1){6}}\put(10,24){\circle*{0.5}}
\put(10,24){\line(-1,-1){6}}\put(4,18){\circle*{0.5}}
\put(4,18){\line(-1,1){6}}
\put(-9,15.5){\small{$x_3'$}}\put(3,15.5){\small{$x_2'$}}
\put(-3,21){\small{$x_5'$}}
\put(9,21){\small{$a'$}}\put(-9,31){\small{$x_{10}'$}}
\put(3,31){\small{$x_{9}'$}}\put(9,31){\small{$i'$}}
\put(10,24){\line(1,-1){6}}\put(16,18){\circle*{0.5}}
\put(16,18){\line(1,1){6}}\put(22,24){\circle*{0.5}}
\put(22,24){\line(1,-1){6}}\put(28,18){\circle*{0.5}}
\put(28,18){\line(-1,-1){6}}\put(22,12){\circle*{0.5}}
\put(22,12){\line(-1,1){6}}\put(16,18){\line(0,1){6}}
\put(16,24){\circle*{0.5}}\put(15,15){\small{$c'$}}
\put(15,25){\small{$x_8'$}}\put(21,25){\small{$x_7'$}}
\put(27,15){\small{$x_4'$}}\put(21,10){\small{$x_1'$}}
\put(30,18){\circle*{0.5}}\put(32,18){\circle*{0.5}}
\put(34,18){\circle*{0.5}}
%%%%%%%%%%%%%%%%
\end{picture}
\end{center}
The tableau $T_{3}$ is uniquely corresponding to a tableau of shape $\sigma_{1}^{n-j}$. Let $T_3'$ be the tableau obtained from $T_3$ by replacing every element $m$ in $T_3$ by $m+1$, and then let the element $(c'+1)$ cover $1$ in the diagram. Clearly $T_3'$ is a tableau of shape $\sigma_{1}^{n-j}$ on the set $[3n-2]$. Note that the above process $(T_{1,1},T_{1,2})\mapsto (T_2',T_3')$ is invertible. In view of
$f^{\sigma_{3j-1}^{\gamma,d}}=f^{\sigma_{3j-1}}$ and $f^{\sigma_{3n-3j-1}^d}=f^{\sigma_{3n-3j-1}}$, eq.~(\ref{E:fsig1}) follows. In combination of eq.~(\ref{E:fsig1}), eq.~(\ref{E:fsig2}), we get eq.~(\ref{E:rec1})
immediately. The proof of eq.~(\ref{E:rec2}) follows analogously
to eq.~(\ref{E:rec1}). The only difference is that we need to consider the tableaux of shape $\tau_{3n}^d$ instead of $\tau_{3n}$. We notice that eq.~(\ref{E:rec2}) is equivalent to
\begin{eqnarray}\label{E:rec2md}
f^{\tau_{3n}^d}=\frac{1}{2n+1}\sum_{i=1}^{n-1}\binom{3n}{3i}
f^{\tau_{3i}^d}f^{\tau_{3n-3i}^d}.
\end{eqnarray}
For an integer $i\in [3n+1]$ and a tableau $T$ of shape $\tau_{3n}^{d}$ on the set $[3n+1]-\{i\}$ as below
\begin{center}
\setlength{\unitlength}{3pt}
\begin{picture}(40,16)(-25,22)
\put(-32,30){\line(1,1){6}}\put(-32,30){\circle*{0.5}}
\put(-26,36){\circle*{0.5}}\put(-26,36){\line(1,-1){6}}
\put(-20,30){\circle*{0.5}}\put(-20,30){\line(-1,-1){6}}
\put(-26,24){\circle*{0.5}}\put(-26,24){\line(-1,1){6}}
\put(-32,30){\line(-1,1){6}}\put(-38,36){\circle*{0.5}}
\put(-20,30){\line(1,0){1}}\put(-18,30){\line(1,0){1}}
\put(-16,30){\line(1,0){1}}\put(-14,30){\line(1,0){1}}
\put(-12,30){\line(1,0){1}}\put(-10,30){\line(1,0){1}}
\put(-15,28){\small{$>$}}\put(-48,30){\small{$T$}:}
%%%%%%%%%%%%%%%
\put(-20,30){\line(1,1){6}}\put(-14,36){\circle*{0.5}}
\put(-14,36){\line(1,-1){6}}\put(-8,30){\circle*{0.5}}
\put(-8,30){\line(-1,-1){6}}\put(-14,24){\circle*{0.5}}
\put(-14,24){\line(-1,1){6}}\put(-6,30){\circle*{0.5}}
\put(-4,30){\circle*{0.5}}\put(-2,30){\circle*{0.5}}
%%%%%%%%%%%%%%%
\put(-20.5,27){\small{$c$}}\put(-17,21){\small{$\min(T)$}}
\put(-8.5,27){\small{$a$}}\put(-27,21){\small{$x_1$}}
\put(3,21){\small{$x_2$}}\put(15,21){\small{$x_3$}}
\put(-33,27){\small{$x_4$}}\put(9,27){\small{$x_6$}}
\put(21,27){\small{$x_7$}}\put(-4,27){\small{$x_{5}$}}
\put(-27,37){\small{$x_8$}}\put(-39,37){\small{$x_{12}$}}
\put(-15,37){\small{$x_9$}}\put(3,37){\small{$x_{10}$}}
\put(15,37){\small{$x_{11}$}}\put(27,37){\small{$x_{13}$}}
%%%%%%%%%%%%%%%%
\put(-2,30){\line(1,1){6}}\put(4,36){\circle*{0.5}}
\put(4,36){\line(1,-1){6}}\put(10,30){\circle*{0.5}}
\put(10,30){\line(-1,-1){6}}\put(4,24){\circle*{0.5}}
\put(4,24){\line(-1,1){6}}
%%%%%%%%%%%%%%%%
\put(10,30){\line(1,1){6}}\put(16,36){\circle*{0.5}}
\put(16,36){\line(1,-1){6}}\put(22,30){\circle*{0.5}}
\put(22,30){\line(-1,-1){6}}\put(16,24){\circle*{0.5}}
\put(16,24){\line(-1,1){6}}\put(22,30){\line(1,1){6}}
\put(28,36){\circle*{0.5}}
%%%%%%%%%%%%%%%%
\end{picture}
\end{center}
where $c>a$. We insert $i$ into $T$ by comparing $i$ and $a$. For simplicity, we choose the tableau $T$ of shape $\tau_{3n}^d$ over $\tau_{3n}$ since the minimal element of $T$ is contained in the bottom row, where every element has both left parent and right parent. The discussion for the location of the minimal element in $T$ follows the same to that for the shape $\sigma_{3n-2}$. But for a tableau $\bar{T}$ of shape $\tau_{3n}$, we need to further discuss the case when the first element or the last element on the bottom row of $\bar{T}$ is minimal, i.e., the case
\begin{center}
\setlength{\unitlength}{3pt}
\begin{picture}(40,16)(-25,22)
\put(-32,30){\line(1,1){6}}\put(-32,30){\circle*{0.5}}
\put(-26,36){\circle*{0.5}}\put(-26,36){\line(1,-1){6}}
\put(-20,30){\circle*{0.5}}\put(-20,30){\line(-1,-1){6}}
\put(-26,24){\circle*{0.5}}\put(-26,24){\line(-1,1){6}}
\put(-32,30){\line(-1,-1){6}}\put(-38,24){\circle*{0.5}}
\put(-48,30){\small{$\bar{T}$}:}
%%%%%%%%%%%%%%%
\put(-20,30){\line(1,1){6}}\put(-14,36){\circle*{0.5}}
\put(-14,36){\line(1,-1){6}}\put(-8,30){\circle*{0.5}}
\put(-8,30){\line(-1,-1){6}}\put(-14,24){\circle*{0.5}}
\put(-14,24){\line(-1,1){6}}\put(-6,30){\circle*{0.5}}
\put(-4,30){\circle*{0.5}}\put(-2,30){\circle*{0.5}}
%%%%%%%%%%%%%%%
\put(-20.5,27){\small{$\bar{c}$}}\put(-15,21){\small{$\bar{x}_0$}}
\put(-8.5,27){\small{$\bar{a}$}}\put(-27,21){\small{$\bar{x}_1$}}
\put(3,21){\small{$\bar{x}_2$}}\put(15,21){\small{$\bar{x}_3$}}
\put(-33,27){\small{$\bar{x}_4$}}\put(9,27){\small{$\bar{x}_6$}}
\put(21,27){\small{$\bar{x}_7$}}\put(-4,27){\small{$\bar{x}_{5}$}}
\put(-27,37){\small{$\bar{x}_8$}}\put(-40,21){\small{$\bar{x}_{12}$}}
\put(-15,37){\small{$\bar{x}_9$}}\put(3,37){\small{$\bar{x}_{10}$}}
\put(15,37){\small{$\bar{x}_{11}$}}\put(27,21){\small{$\bar{x}_{13}$}}
%%%%%%%%%%%%%%%%
\put(-2,30){\line(1,1){6}}\put(4,36){\circle*{0.5}}
\put(4,36){\line(1,-1){6}}\put(10,30){\circle*{0.5}}
\put(10,30){\line(-1,-1){6}}\put(4,24){\circle*{0.5}}
\put(4,24){\line(-1,1){6}}
%%%%%%%%%%%%%%%%
\put(10,30){\line(1,1){6}}\put(16,36){\circle*{0.5}}
\put(16,36){\line(1,-1){6}}\put(22,30){\circle*{0.5}}
\put(22,30){\line(-1,-1){6}}\put(16,24){\circle*{0.5}}
\put(16,24){\line(-1,1){6}}\put(22,30){\line(1,-1){6}}
\put(28,24){\circle*{0.5}}
%%%%%%%%%%%%%%%%
\end{picture}
\end{center}
when $\bar{x}_{12}$ or $\bar{x}_{13}$ is $\min(\bar{T})$. The rest of the proof for eq.~(\ref{E:rec2md})
follows the same to that for eq.~(\ref{E:rec1}) and is omitted here.
\end{proof}

{\bf Remark}: In the proof of Theorem~\ref{T:decom} we use the property that both the shape $\sigma_{3n-2}$ and $\tau_{3n}$ are preserved under the reflection $\gamma$. This condition is necessary for the decomposition of shape $\sigma_{3n-2}$ because of eq.~(\ref{E:fsig1}), where we need to reverse the diagram left to right. However, the ``decomposition'' idea shown in Theorem~\ref{T:decom} is not restricted to the shape that is preserved under the reflection $\gamma$. For instance, we can decompose every $3$-strip tableau of shape $\sigma_{3n-1}^d$ even though the shape $\sigma_{3n-1}^d$ is not preserved under the reflection $\gamma$. Let $\CMcal{C}_n$ be the set of $3$-strip tableaux of shape $\sigma_{3n-1}^d$ and $\sigma_{3n-1}^{\gamma,d}$, then the set $\CMcal{C}_n$ is closed under the reflection $\gamma$. We can apply the proof of Theorem~\ref{T:decom} on any tableau from $\CMcal{C}_n$, which yields: For $n\ge 2$,
\begin{align*}
f^{\sigma_{3n-1}}=\frac{1}{2n}\sum_{i=1}^{n-1}\binom{3n-1}{3i}f^{\tau_{3i}}f^{\sigma_{3n-3i-1}}.
\end{align*}

From Theorem~\ref{T:decom} we can prove Theorem~\ref{T:3strip} by using the generating function approach.
\subsection{Proof of Theorem~\ref{T:3strip}}\label{S:sub1}
This subsection is independent of Section~\ref{S:bij}. We first derive the generating functions for $f^{\sigma_{3n-1}}$ and $f^{\tau_{3n}}$ from the recursions given in Theorem~\ref{T:decom}. In combination of eq.~(\ref{E:reTf}), we can further prove the generating functions for $f^{\sigma_{3n}}$ given in eq.~(\ref{E:gen3}). In view of eq.~(\ref{E:div1}), we immediately have the generating function of
$f^{\sigma_{3n-2}}$ after we prove eq.~(\ref{E:gen2}). By extracting the coefficients from these generating functions, Theorem~\ref{T:3strip}
is proved. More precisely, let
\begin{eqnarray*}
f(x)=\sum_{n\ge 1}\frac{f^{\sigma_{3n-2}}x^{2n-1}}{(3n-2)!},\quad
g(x)=\sum_{n\ge 1}\frac{f^{\sigma_{3n-1}}x^{2n-1}}{(3n-1)!},\quad
h(x)=\sum_{n\ge 1}\frac{f^{\tau_{3n}}x^{2n-1}}{(3n)!},
\end{eqnarray*}
then from eq.~(\ref{E:div1}) we have $f(x)=2g(x)$. Furthermore, eq.~(\ref{E:rec1}) is
equivalent to
\begin{equation*}
f'(x)=1+g(x)^2\quad \mbox{ where } f(0)=0.
\end{equation*}
This leads to a unique solution,
$g(x)=\tan(x/2)$. Together with the exponential generating function for $E_{2n-1}$,
eq.~(\ref{E:3s2}) and eq.~(\ref{E:3s1}) are proved. Similarly, eq.~(\ref{E:rec2}) is equivalent to
\begin{equation*}
-xh'(x)=-x+2h(x)-xh^2(x)\quad \mbox{ where } h(0)=0.
\end{equation*}
This yields a unique solution
$$h(x)=-\frac{1}{\tan(x)}+\frac{1}{x}=\frac{1}{3}x+\frac{1}{45}x^3+\cdots$$
and consequently in view of eq.~(\ref{E:reTf}), the exponential generating function for $f^{\sigma_{3n}}$ is equal to $g(x)-h(x)$, thus eq.~(\ref{E:gen3}) follows. By considering the expansion of $x/\sin(x)$, we finally obtain the coefficients $f^{\sigma_{3n}}$, i.e., eq.~(\ref{E:3s}). Alternatively, we can obtain the expression of $f^{\sigma_{3n-2}}$ and $f^{\tau_{3n}}$ by using the recursions of tangent numbers $E_{2n-1}$ and Bernoulli numbers $B_{2n}$. The Bernoulli numbers $B_{2n}$ are integers defined from the tangent numbers $E_{2n-1}$ by the relation:
\begin{eqnarray*}
B_{2n}=\frac{nE_{2n-1}}{2^{2n-1}(2^{2n}-1)}\quad \mbox{where}\quad n\ge 1.
\end{eqnarray*}
The recursions for $E_{2n-1}$ and $B_{2n}$ are
\begin{align*}
E_{2n-1}=\sum_{i=1}^{n-1}\binom{2n-2}{2i-1}E_{2i-1}E_{2n-2i-1},\quad
B_{2n}=\frac{1}{2n+1}\sum_{i=1}^{n-1}\binom{2n}{2i}B_{2i}B_{2n-2i}.
\end{align*}
After verifying the initial conditions that $f^{\sigma_1}=E_1$, $2^2f^{\sigma_4}=4E_3$,
$1!(2^2-1)f^{\tau_3}=3!E_1$ and $3!(2^4-1)f^{\tau_6}=6!E_3$, we can inductively prove eq.~(\ref{E:3s2}) and
$$f^{\tau_{3n}}=\frac{(3n)!2^{2n}B_{2n}}{(2n)!}=\frac{(3n)!E_{2n-1}}{(2n-1)!(2^{2n}-1)}.$$
In view of Lemma~\ref{L:funlem}, we can further obtain the expression of $f^{\sigma_{3n-1}}$ and $f^{\sigma_{3n}}$. Henceforth the proof of Theorem~\ref{T:3strip} is complete.
\qed
\section{Bijective Proof of Theorem~\ref{T:3strip}}\label{S:bij}
Recall that from Lemma~\ref{L:funlem} we find it is sufficient to prove
\begin{align}\label{E:bij1}
2^{2n-2}f^{\sigma_{3n-2}}=&\frac{(3n-2)!}{(2n-1)!}E_{2n-1},\\
\label{E:taubij}(2^{2n}-1)f^{\tau_{3n}}=&\frac{(3n)!}{(2n-1)!}E_{2n-1}.
\end{align}
We will continue using the natural labeling on the corresponding poset to represent the standard Young  tableaux of shape $\sigma_{3n-2}$ and $\tau_{3n}$. We shall first prove eq.~(\ref{E:bij1}). We call $\sigma=\sigma_1\sigma_2\ldots \sigma_{2n-1}$ an up-down permutation (resp. a down-up permutation) on the set $\{\sigma_1,\sigma_2,\ldots,\sigma_{2n-1}\}$ of $\mathbb{N}$ if $\sigma_1<\sigma_2>\cdots <\sigma_{2n-2}>\sigma_{2n-1}$ (resp. $\sigma_1>\sigma_2<\cdots >\sigma_{2n-2}<\sigma_{2n-1}$) and $\sigma_1,\sigma_2,\ldots,\sigma_{2n-1}$ are $(2n-1)$ distinct positive integers. For $n\ge 2$, let $\CMcal{A}_n$ be the set of pairs $(\sigma,(x_1,\ldots,x_{n-1}))$ where $(x_1,\ldots,x_{n-1})$ is a sequence of $(n-1)$ distinct positive integers from $[3n-2]$, and $\sigma=\sigma_1\cdots \sigma_{2n-1}$ is an up-down permutation on the set $[3n-2]-\{x_1,\ldots,x_{n-1}\}$, let furthermore $\CMcal{A}_1=\{\cdot _1\}$ be the set of a single point labeled with $1$. It is clear that the set $\CMcal{A}_n$ has cardinality $\vert\CMcal{A}_n\vert=(3n-2)!/(2n-1)!E_{2n-1}$ for $n\ge 1$.
If we omit the labels in between, $\sigma$ is represented as
\begin{center}
\setlength{\unitlength}{3pt}
\begin{picture}(40,15)(-25,25)
\put(-32,30){\line(1,1){6}}\put(-32,30){\circle*{0.5}}
\put(-26,36){\circle*{0.5}}\put(-26,36){\line(1,-1){6}}
\put(-20,30){\circle*{0.5}}\put(-20,30){\line(1,1){6}}
\put(-14,36){\circle*{0.5}}\put(-14,36){\line(1,-1){6}}
\put(-8,30){\circle*{0.5}}\put(-6,30){\circle*{0.5}}
\put(-4,30){\circle*{0.5}}
\put(-4,30){\line(1,1){6}}
\put(2,36){\circle*{0.5}}\put(2,36){\line(1,-1){6}}
\put(8,30){\circle*{0.5}}\put(8,30){\line(1,1){6}}
\put(14,36){\circle*{0.5}}\put(14,36){\line(1,-1){6}}
\put(20,30){\circle*{0.5}}
%%%%%%%%%%%%%%
\put(-33,27.5){\small{$\sigma_1$}}\put(-21,27.5){\small{$\sigma_3$}}
\put(-27,37.5){\small{$\sigma_2$}}
\put(5,27.5){\small{$\sigma_{2n-3}$}}\put(17,27.5){\small{$\sigma_{2n-1}$}}
\put(11,37.5){\small{$\sigma_{2n-2}$}}
\end{picture}
\end{center}
For $n\ge 2$, we will add the integers $x_1,\ldots,x_{n-1}$ successively into $\sigma$ as follows:
%%%%%%%%%%%%%%%%%
\begin{tabbing}
{\bf Algorithm $1$}: {Insert the sequence $(x_1,\ldots,x_{n-1})$ into the up-down permutation $\sigma$}.\\
\rule{1cm}{0mm}for $i:=1\rightarrow n-1$ do\\
\rule{2cm}{0mm}\=$r_i:=\min\{\sigma_{2i-1},\sigma_{2i+1}\}$;\\
\>if $x_i<r_i$ then\\
\>\rule{1cm}{0mm}\=let both $\sigma_{2i-1}$ and $\sigma_{2i+1}$ cover $x_i$;\\
\>else \\
\>\rule{1cm}{0mm}\=let $x_i$ cover $r_i$ and remove the edge $\{\sigma_{2i},r_i\}$;\\
\>end if\\
\rule{1cm}{0mm}end do
\end{tabbing}
In the Algorithm $1$, we actually apply the idea of decomposing $3$-strip tableaux of shape $\sigma_{3n-2}$ to the up-down permutation $\sigma$, namely each time when $x_i>r_i$ we have
\begin{center}
\setlength{\unitlength}{3pt}
\begin{picture}(40,15)(-35,25)
\put(-37,27){\line(1,1){6}}\put(-37,27){\circle*{0.5}}
\put(-31,33){\circle*{0.5}}\put(-31,33){\line(1,-1){6}}
\put(-25,27){\circle*{0.5}}\put(-37,27){\line(1,0){1}}
\put(-35,27){\line(1,0){1}}\put(-33,27){\line(1,0){1}}
\put(-31,27){\line(1,0){1}}\put(-29,27){\line(1,0){1}}
\put(-27,27){\line(1,0){1}}\put(-32,27.5){\small{$<$}}
\put(-37,27){\line(-1,1){6}}\put(-43,33){\circle*{0.5}}
\put(-17,29){\small{or}}\put(-7,27){\circle*{0.5}}
\put(-7,27){\line(1,1){6}}\put(-1,33){\circle*{0.5}}
\put(-1,33){\line(1,-1){6}}\put(5,27){\circle*{0.5}}
\put(-7,27){\line(1,0){1}}\put(-5,27){\line(1,0){1}}
\put(-3,27){\line(1,0){1}}\put(-1,27){\line(1,0){1}}
\put(1,27){\line(1,0){1}}\put(3,27){\line(1,0){1}}
\put(-2,27.5){\small{$>$}}\put(5,27){\line(1,1){6}}
\put(11,33){\circle*{0.5}}
%%%%%%%%%%%%%%%%
\put(-37.5,24){\small{$r_i$}}\put(-25.5,24){\small{$\sigma_{2i+1}$}}
\put(-46.5,33){\small{$x_i$}}\put(-10,24){\small{$\sigma_{2i-1}$}}
\put(4.5,24){\small{$r_i$}}\put(12,33){\small{$x_i$}}
\put(-31.5,34){\small{$\sigma_{2i}$}}
\put(-1.5,34){\small{$\sigma_{2i}$}}
\end{picture}
\end{center}
Since $\sigma_{2i}>\sigma_{2i+1}>r_i$ on the left and $\sigma_{2i}>\sigma_{2i-1}>r_i$ on the right, the above diagrams are equivalent to
\begin{center}
\setlength{\unitlength}{3pt}
\begin{picture}(40,15)(-35,25)
\put(-31,27){\circle*{0.5}}\put(-31,27){\line(0,1){8}}
\put(-31,35){\circle*{0.5}}\put(-31,27){\line(2,1){6}}
\put(-25,30){\circle*{0.5}}\put(-25,30){\line(0,1){8}}
\put(-25,38){\circle*{0.5}}\put(-31.5,24.5){\small{$r_i$}}
\put(-31.5,36){\small{$x_i$}}
\put(-25.5,39){\small{$\sigma_{2i}$}}
\put(-25.5,27.5){\small{$\sigma_{2i+1}$}}
\put(-12,27.5){\small{$\sigma_{2i-1}$}}
\put(-17,32){\small{or}}
\put(-7,30){\circle*{0.5}}\put(-7,30){\line(0,1){8}}
\put(-7,38){\circle*{0.5}}\put(-7,30){\line(2,-1){6}}
\put(-1,27){\circle*{0.5}}\put(-1,27){\line(0,1){8}}
\put(-1,35){\circle*{0.5}}\put(-2.5,24.5){\small{$r_i$}}
\put(-2.5,36){\small{$x_i$}}
\put(-8.5,39){\small{$\sigma_{2i}$}}
\end{picture}
\end{center}
This allows us to separate them into two independent pieces by removing the edge $\{r_i,\sigma_{2i+1}\}$ on the left and the edge $\{r_i,\sigma_{2i-1}\}$ on the right. In fact, the above process is invertible and henceforth Algorithm $1$ is a bijection between $\CMcal{A}_n$ and its image set. For example, consider the pair $(\bar{\sigma}_1,(3,6,1,8))$ where $n=5$, $\bar{\sigma}_1=25497\,11\,10\,13\,12$ is an up-down permutation and $(3,6,1,8)$ is a sequence of integers, i.e., $\bar{\sigma}_1$ is
\begin{center}
\setlength{\unitlength}{3pt}
\begin{picture}(40,15)(-25,25)
\put(-32,30){\line(1,1){6}}\put(-32,30){\circle*{0.5}}
\put(-26,36){\circle*{0.5}}\put(-26,36){\line(1,-1){6}}
\put(-20,30){\circle*{0.5}}\put(-20,30){\line(1,1){6}}
\put(-14,36){\circle*{0.5}}\put(-14,36){\line(1,-1){6}}
\put(-8,30){\circle*{0.5}}\put(-8,30){\line(1,1){6}}
\put(-2,36){\circle*{0.5}}\put(-2,36){\line(1,-1){6}}
\put(4,30){\circle*{0.5}}\put(4,30){\line(1,1){6}}
\put(10,36){\circle*{0.5}}\put(10,36){\line(1,-1){6}}
\put(16,30){\circle*{0.5}}
%%%%%%%%%%%%%%
\put(-33,27.5){\footnotesize{$2$}}\put(-21,27.5){\footnotesize{$4$}}
\put(-27,37){\footnotesize{$5$}}\put(-15,37){\footnotesize{$9$}}\put(-3,37){\footnotesize{$11$}}
\put(-9,27.5){\footnotesize{$7$}}
\put(3,27.5){\footnotesize{$10$}}\put(15,27.5){\footnotesize{$12$}}
\put(9,37){\footnotesize{$13$}}
\end{picture}
\end{center}
After applying Algorithm $1$ on the pair $(\bar{\sigma}_1,(3,6,1,8))$, we finally obtain
\begin{center}
\setlength{\unitlength}{3pt}
\begin{picture}(40,18)(-25,24)
\put(-37,27){\line(0,1){8}}\put(-37,27){\circle*{0.5}}
\put(-37,35){\circle*{0.5}}
\put(-37.5,24){\footnotesize{$2$}}\put(-37.5,36){\footnotesize{$3$}}
\put(-22,27){\circle*{0.5}}
\put(-22,27){\line(-1,1){6}}\put(-22,27){\line(1,1){6}}
\put(-28,33){\circle*{0.5}}\put(-22.5,24){\footnotesize{$4$}}
\put(-16,33){\circle*{0.5}}\put(-29,34){\footnotesize{$5$}}
\put(-16,34){\footnotesize{$6$}}
\put(5,27){\circle*{0.5}}\put(5,27){\line(-1,1){6}}
\put(-1,33){\circle*{0.5}}\put(-1,33){\line(-1,1){6}}
\put(-7,39){\circle*{0.5}}\put(5,27){\line(1,1){6}}
\put(11,33){\circle*{0.5}}\put(11,33){\line(-1,1){6}}
\put(5,39){\circle*{0.5}}\put(5,39){\line(-1,-1){6}}
%%%%%%%%%%
\put(11,33){\line(1,1){6}}\put(17,39){\circle*{0.5}}
\put(17,39){\line(1,-1){6}}\put(23,33){\circle*{0.5}}
\put(23,33){\line(-1,-1){6}}\put(17,27){\circle*{0.5}}
\put(17,27){\line(-1,1){6}}\put(4.5,24){\footnotesize{$1$}}
\put(16.5,24){\footnotesize{$8$}}\put(-2.5,30){\footnotesize{$7$}}
\put(-8,40){\footnotesize{$9$}}\put(9.5,29){\footnotesize{$10$}}
\put(21.5,29){\footnotesize{$12$}}\put(4,40){\footnotesize{$11$}}
\put(16,40){\footnotesize{$13$}}
\end{picture}
\end{center}
which is a sequence of standard Young tableaux having shape $\sigma_2,\tau_3^{d},\sigma_{8}^{d}$. The shapes $\tau_3^d$ and $\sigma_8^d$ are the dual Hasse diagrams of $\tau_3$ and $\sigma_8$, respectively. From the above sequence of tableaux, we can retrieve the pair $(\bar{\sigma}_1,(3,6,1,8))$ by applying the inverse algorithm $1$. More precisely, we first compare $2$ and $4$ contained in the first two tableaux from above, since $2<4$, we add the edge $\{2,5\}$ and remove the edge $\{2,3\}$. This gives us
\begin{center}
\setlength{\unitlength}{3pt}
\begin{picture}(40,18)(-25,24)
\put(-37,27){\circle*{0.5}}
\qbezier(-37,27)(-32.5,30)(-28,33)
\put(-37,35){\circle*{0.5}}
\put(-37.5,24){\footnotesize{$2$}}\put(-37.5,36){\footnotesize{$3$}}
\put(-22,27){\circle*{0.5}}
\put(-22,27){\line(-1,1){6}}\put(-22,27){\line(1,1){6}}
\put(-28,33){\circle*{0.5}}\put(-22.5,24){\footnotesize{$4$}}
\put(-16,33){\circle*{0.5}}\put(-29,34){\footnotesize{$5$}}
\put(-16,34){\footnotesize{$6$}}
\put(5,27){\circle*{0.5}}\put(5,27){\line(-1,1){6}}
\put(-1,33){\circle*{0.5}}\put(-1,33){\line(-1,1){6}}
\put(-7,39){\circle*{0.5}}\put(5,27){\line(1,1){6}}
\put(11,33){\circle*{0.5}}\put(11,33){\line(-1,1){6}}
\put(5,39){\circle*{0.5}}\put(5,39){\line(-1,-1){6}}
%%%%%%%%%%
\put(11,33){\line(1,1){6}}\put(17,39){\circle*{0.5}}
\put(17,39){\line(1,-1){6}}\put(23,33){\circle*{0.5}}
\put(23,33){\line(-1,-1){6}}\put(17,27){\circle*{0.5}}
\put(17,27){\line(-1,1){6}}\put(4.5,24){\footnotesize{$1$}}
\put(16.5,24){\footnotesize{$8$}}\put(-2.5,30){\footnotesize{$7$}}
\put(-8,40){\footnotesize{$9$}}\put(9.5,29){\footnotesize{$10$}}
\put(21.5,29){\footnotesize{$12$}}\put(4,40){\footnotesize{$11$}}
\put(16,40){\footnotesize{$13$}}
\end{picture}
\end{center}
Secondly we compare $4$ and $7$ from the above diagram. Since $4<7$, we add the edge $\{4,9\}$ and remove the edge $\{4,6\}$. This gives us
\begin{center}
\setlength{\unitlength}{3pt}
\begin{picture}(40,18)(-25,24)
\put(-37,27){\circle*{0.5}}
\qbezier(-37,27)(-32.5,30)(-28,33)
\put(-37,35){\circle*{0.5}}
\put(-37.5,24){\footnotesize{$2$}}\put(-37.5,36){\footnotesize{$3$}}
\put(-22,27){\circle*{0.5}}
\put(-22,27){\line(-1,1){6}}
\qbezier(-22,27)(-14.5,33)(-7,39)
\put(-28,33){\circle*{0.5}}\put(-22.5,24){\footnotesize{$4$}}
\put(-16,35){\circle*{0.5}}\put(-29,34){\footnotesize{$5$}}
\put(-16,36){\footnotesize{$6$}}
\put(5,27){\circle*{0.5}}\put(5,27){\line(-1,1){6}}
\put(-1,33){\circle*{0.5}}\put(-1,33){\line(-1,1){6}}
\put(-7,39){\circle*{0.5}}\put(5,27){\line(1,1){6}}
\put(11,33){\circle*{0.5}}\put(11,33){\line(-1,1){6}}
\put(5,39){\circle*{0.5}}\put(5,39){\line(-1,-1){6}}
%%%%%%%%%%
\put(11,33){\line(1,1){6}}\put(17,39){\circle*{0.5}}
\put(17,39){\line(1,-1){6}}\put(23,33){\circle*{0.5}}
\put(23,33){\line(-1,-1){6}}\put(17,27){\circle*{0.5}}
\put(17,27){\line(-1,1){6}}\put(4.5,24){\footnotesize{$1$}}
\put(16.5,24){\footnotesize{$8$}}\put(-2.5,30){\footnotesize{$7$}}
\put(-8,40){\footnotesize{$9$}}\put(9.5,29){\footnotesize{$10$}}
\put(21.5,29){\footnotesize{$12$}}\put(4,40){\footnotesize{$11$}}
\put(16,40){\footnotesize{$13$}}
\end{picture}
\end{center}
Thirdly we remove the edges connecting to $1$ and $8$. This gives us the pair
$(\bar{\sigma}_1,(3,6,1,8))$, i.e.,
\begin{center}
\setlength{\unitlength}{3pt}
\begin{picture}(40,18)(-25,24)
\put(-37,27){\circle*{0.5}}
\qbezier(-37,27)(-32.5,30)(-28,33)
\put(-37,35){\circle*{0.5}}
\put(-37.5,24){\footnotesize{$2$}}\put(-37.5,36){\footnotesize{$3$}}
\put(-22,27){\circle*{0.5}}
\put(-22,27){\line(-1,1){6}}
\qbezier(-22,27)(-14.5,33)(-7,39)
\put(-28,33){\circle*{0.5}}\put(-22.5,24){\footnotesize{$4$}}
\put(-16,35){\circle*{0.5}}\put(-29,34){\footnotesize{$5$}}
\put(-16,36){\footnotesize{$6$}}
\put(5,27){\circle*{0.5}}
\put(-1,33){\circle*{0.5}}\put(-1,33){\line(-1,1){6}}
\put(-7,39){\circle*{0.5}}
\put(11,33){\circle*{0.5}}\put(11,33){\line(-1,1){6}}
\put(5,39){\circle*{0.5}}\put(5,39){\line(-1,-1){6}}
%%%%%%%%%%
\put(11,33){\line(1,1){6}}\put(17,39){\circle*{0.5}}
\put(17,39){\line(1,-1){6}}\put(23,33){\circle*{0.5}}
\put(17,27){\circle*{0.5}}
\put(4.5,24){\footnotesize{$1$}}
\put(16.5,24){\footnotesize{$8$}}\put(-2.5,30){\footnotesize{$7$}}
\put(-8,40){\footnotesize{$9$}}\put(9.5,30){\footnotesize{$10$}}
\put(21.5,30){\footnotesize{$12$}}\put(4,40){\footnotesize{$11$}}
\put(16,40){\footnotesize{$13$}}
\end{picture}
\end{center}
For $n\ge 2$, let $\CMcal{T}_{3n-2}$ be the image set of pairs $(\sigma,(x_1,\ldots,x_{n-1}))\in\CMcal{A}_n$ under Algorithm $1$. For $n=1$ we assume $\CMcal{T}_{1}=\CMcal{A}_1=\{\cdot _1\}$. Since Algorithm $1$ is a bijection between $\CMcal{A}_n$ and $\CMcal{T}_{3n-2}$ for $n\ge 2$, it follows from Algorithm $1$ that $\vert \CMcal{T}_{3n-2}\vert=\vert\CMcal{A}_n\vert=(2n)(2n+1)\cdots(3n-2)E_{2n-1}$ for $n\ge 2$, and moreover $\vert\CMcal{T}_{1}\vert=\vert\CMcal{A}_1\vert=E_1$. It remains to establish
\begin{eqnarray}\label{E:bij2}
\vert \CMcal{T}_{3n-2}\vert=2^{2n-2}f^{\sigma_{3n-2}}.
\end{eqnarray}
Since every element in $\CMcal{T}_{3n-2}$ is a sequence of $3$-strip tableaux, it is much simpler to count $\CMcal{T}_{3n-2}$ via the symbolic method for the labeled cases, see Chapter $2$ of \cite{FS}. We will adopt the notations $\star,\textsc{Seq},\textsc{Seq}_k$ for the labeled cases from \cite{FS}. Their definitions are
\begin{enumerate}
\item $\CMcal{A}\star \CMcal{B}$ is the labeled product of $\CMcal{A}$ and $\CMcal{B}$, which is
obtained by forming ordered pairs from $A\times B$ and performing all possible order-consistent relabellings.
\item The $k$-th labelled power of $\CMcal{B}$ is defined as $(\CMcal{B}\star\CMcal{B}\cdots \star \CMcal{B})$, with $k$ factors equal to $\CMcal{B}$. It is denoted $\textsc{Seq}_k(\CMcal{B})$ as it corresponds to forming $k$-sequences and performing all consistent relabellings. The labelled sequence class of $\CMcal{B}$ is denoted by $\textsc{Seq}(\CMcal{B})$ and is defined by $\textsc{Seq}(\CMcal{B})=\{\epsilon\}+\CMcal{B}+(\CMcal{B}\star \CMcal{B})+(\CMcal{B}\star \CMcal{B}\star \CMcal{B})+\cdots=\bigcup_{k\ge 0}\textsc{Seq}_k(\CMcal{B})$ where $\epsilon$ represents the empty structure.
\end{enumerate}
Now we are in position to apply the symbolic method to count $\CMcal{T}_{3n-2}$. Let $\CMcal{G}$ (resp. $\CMcal{G}^{\gamma,d}$, $\CMcal{G}^{d}$) be the class of standard Young tableaux of shape $\sigma_{3n-1}$ (resp. $\sigma_{3n-1}^{\gamma,d}$, $\sigma_{3n-1}^d$) for all $n\ge 1$, let $\CMcal{F}$ be the class of standard Young tableaux of shape $\sigma_{3n-2}$ for all $n\ge 1$, let
$\CMcal{H}$ (resp. $\CMcal{H}^d$) be the class of standard Young tableaux of shape $\tau_{3n}$ (resp. $\tau_{3n}^d$) for all $n\ge 1$. Let furthermore $\CMcal{T}=\bigcup_{n\ge 1}\CMcal{T}_{3n-2}$ be the class of objects in $\CMcal{T}_{3n-2}$ for all $n\ge 1$. We observe, for every $(T_1,T_2,\ldots,T_m)\in\CMcal{T}_{3n-2}$ where $T_i$ is a standard Young tableau for every $i$, if $m=1$ and $n\ge 2$, i.e., in the Algorithm $1$, $x_i<r_i$ is true for every $i$, therefore $T_1$ must be a $3$-strip tableau of shape $\sigma_{3n-2}$ for $n\ge 2$. If $m=1$ and $n=1$, then $T_1$ is a labeled point, which is a $3$-strip tableau of shape $\sigma_1$. In both cases, $T_1$ has shape $\sigma_{3n-2}$ for $n\ge 1$, which belongs to the class $\CMcal{F}$. If $m\ge 2$, then $T_1$ and $T_m$ have shape $\sigma_{3i-1}^{\gamma,d}$ and $\sigma_{3k-1}^d$ respectively. For every $i\ne 1,i\ne m$, $T_i$ has shape $\tau_{3j_i}^d$ for some $j_i\ge 1$, namely $(T_1,T_2,\ldots,T_m)$ belongs to the class $\CMcal{G}^{\gamma,d}\star \textsc{Seq}(\CMcal{H}^d)\star \CMcal{G}^{d}$. In sum, we have
\begin{eqnarray}\label{E:sym}
\CMcal{T}=\CMcal{F}+\CMcal{G}^{\gamma,d}\star \textsc{Seq}(\CMcal{H}^d)\star \CMcal{G}^{d}.
\end{eqnarray}
Let $F(x),G(x),H(x),T(x)$ be the exponential generating function for the number $f^{\sigma_{3n-2}}$, $f^{\sigma_{3n-1}}$, $f^{\tau_{3n}}$ and $\vert\CMcal{T}_{3n-2}\vert$ respectively, i.e.,
\begin{align*}
F(x)&=\sum_{n\ge 1}\frac{f^{\sigma_{3n-2}}x^{3n-2}}{(3n-2)!},\,
G(x)=\sum_{n\ge 1}\frac{f^{\sigma_{3n-1}}x^{3n-1}}{(3n-1)!},\\
H(x)&=\sum_{n\ge 1}\frac{f^{\tau_{3n}}x^{3n}}{(3n)!},\,\quad\quad
T(x)=\sum_{n\ge 1}\frac{\vert\CMcal{T}_{3n-2}\vert x^{3n-2}}{(3n-2)!}.
\end{align*}
Here note that $G(x)$ is also the exponential generating function for $f^{\sigma_{3n-1}^{\gamma,d}}$ and
$f^{\sigma_{3n-1}^{d}}$. $H(x)$ is also the exponential generating function for $f^{\tau_{3n}^{d}}$, thus eq.~(\ref{E:sym}), in terms of the exponential generating functions, is
\begin{eqnarray}\label{E:de1}
T(x)=F(x)+\frac{G^2(x)}{1-H(x)}.
\end{eqnarray}
Moreover, from Lemma~\ref{L:funlem} we have $G(x)=xF(x)/2$. Similarly we can insert a sequence of integers to a down-up permutation. For $n\ge 2$, let $\CMcal{B}_n$ be the set of pairs $(\sigma,(x_1,\ldots,x_{n-1}))$ where $(x_1,\ldots,x_{n-1})$ is a sequence of $(n-1)$ distinct positive integers from $[3n]$, and $\sigma=\sigma_1\cdots \sigma_{2n+1}$ is a down-up permutation of the set $[3n]-\{x_1,\ldots,x_{n-1}\}$, it is clear that $\vert\CMcal{B}_n\vert=(2n+2)(2n+3)\cdots (3n)E_{2n+1}$. We will add the integers $x_1,\ldots,x_{n-1}$ successively into $\sigma$ as follows:
\begin{tabbing}
{\bf Algorithm $2$}: {Insert the sequence $(x_1,\ldots,x_{n-1})$ into the down-up permutation $\sigma$}.\\
\rule{1cm}{0mm}for $i:=1\rightarrow n-1$ do\\
\rule{2cm}{0mm}\=$r_i:=\min\{\sigma_{2i},\sigma_{2i+2}\}$;\\
\>if $x_i<r_i$ then\\
\>\rule{1cm}{0mm}\=let both $\sigma_{2i}$ and $\sigma_{2i+2}$ cover $x_i$;\\
\>else \\
\>\rule{1cm}{0mm}\=let $x_i$ cover $r_i$ and remove the edge $\{\sigma_{2i+1},r_i\}$;\\
\>end if\\
\rule{1cm}{0mm}end do
\end{tabbing}
For example, consider the pair $(\bar{\sigma}_2,(3,4,10))$ where $n=4$, $\bar{\sigma}_2=216587\,11\,9\,12$ is a down-up permutation and $(3,4,10)$ is a sequence of integers, i.e., $\bar{\sigma}_2$ is
\begin{center}
\setlength{\unitlength}{3pt}
\begin{picture}(40,15)(-25,25)
\put(-32,30){\line(1,1){6}}\put(-32,30){\circle*{0.5}}
\put(-26,36){\circle*{0.5}}\put(-32,30){\line(-1,1){6}}
\put(-38,36){\circle*{0.5}}\put(-26,36){\line(1,-1){6}}
\put(-20,30){\circle*{0.5}}\put(-20,30){\line(1,1){6}}
\put(-14,36){\circle*{0.5}}\put(-14,36){\line(1,-1){6}}
\put(-8,30){\circle*{0.5}}\put(-8,30){\line(1,1){6}}
\put(-2,36){\circle*{0.5}}\put(-2,36){\line(1,-1){6}}
\put(4,30){\circle*{0.5}}\put(4,30){\line(1,1){6}}
\put(10,36){\circle*{0.5}}
\put(-32.5,27){\footnotesize{$1$}}\put(-39,37){\footnotesize{$2$}}
\put(-27,37){\footnotesize{$6$}}\put(-20.5,27){\footnotesize{$5$}}
\put(-15,37){\footnotesize{$8$}}\put(-8.5,27){\footnotesize{$7$}}
\put(-3,37){\footnotesize{$11$}}\put(3.5,27){\footnotesize{$9$}}
\put(9,37){\footnotesize{$12$}}
\end{picture}
\end{center}
After applying Algorithm $2$ on the pair $(\bar{\sigma}_2,(3,4,10))$, we obtain
\begin{center}
\setlength{\unitlength}{3pt}
\begin{picture}(40,18)(-25,24)
\put(-37,27){\line(1,1){6}}\put(-37,27){\circle*{0.5}}
\put(-31,33){\circle*{0.5}}
\put(-37.5,24){\footnotesize{$1$}}\put(-37,27){\line(-1,1){6}}
\put(-43,33){\circle*{0.5}}\put(-32,34){\footnotesize{$3$}}
\put(-44,34){\footnotesize{$2$}}
\put(-12,27){\circle*{0.5}}\put(-12,27){\line(-1,1){6}}
\put(-18,33){\circle*{0.5}}\put(-18,33){\line(-1,1){6}}
\put(-24,39){\circle*{0.5}}\put(-18,33){\line(1,1){6}}
\put(-12,39){\circle*{0.5}}\put(-12,39){\line(1,-1){6}}
\put(-6,33){\circle*{0.5}}\put(-6,33){\line(-1,-1){6}}
\put(-6,33){\line(1,1){6}}\put(0,39){\circle*{0.5}}
%%%%%%%%%%%%%%%%%
\put(-12.5,24){\footnotesize{$4$}}\put(-19,30){\footnotesize{$5$}}
\put(-25,40){\footnotesize{$6$}}\put(-13,40){\footnotesize{$8$}}
\put(-1,40){\footnotesize{$10$}}\put(-7,30){\footnotesize{$7$}}
%%%%%%%%%%%%%%%%%
\put(13,27){\circle*{0.5}}\put(13,27){\line(1,1){6}}\put(19,33){\circle*{0.5}}
\put(13,27){\line(-1,1){6}}\put(7,33){\circle*{0.5}}
%%%%%%%%%%%%%%%%%
\put(12.5,24){\footnotesize{$9$}}\put(18,34){\footnotesize{$12$}}
\put(6,34){\footnotesize{$11$}}
\end{picture}
\end{center}
which is a sequence of standard Young tableaux having shape $\tau_3^d$ and $\tau_6^d$. Similarly from the above sequence of tableaux, we can retrieve the pair $(\bar{\sigma}_2,(3,4,10))$ by applying the inverse algorithm $2$. For $n\ge 2$, let $\CMcal{T}_{3n}$ be the image set of pairs $(\sigma,(x_1,\ldots,x_{n-1}))\in\CMcal{B}_n$ under Algorithm $2$. Clearly the Algorithm $2$ is a bijection between $\CMcal{B}_n$ and $\CMcal{T}_{3n}$, therefore $\vert \CMcal{T}_{3n}\vert=\vert\CMcal{B}_n\vert=(2n+2)(2n+3)\cdots(3n)E_{2n+1}$ for $n\ge 2$. For $n=0,1$, we assume $\CMcal{T}_0=\{\epsilon\}$ where $\epsilon$ is the empty structure and $\CMcal{T}_{3}$ is the set of tableaux of shape $\tau_3^d$ on the set $[3]$. Let $\CMcal{T}^1=\bigcup_{n\ge 0}\CMcal{T}_{3n}$ be the class of objects in $\CMcal{T}_{3n}$ for all $n\ge 0$, then the class $\CMcal{T}^1$ contains the missing tableaux of shape $\tau_3^d$ and the empty structure $\epsilon$ under the Algorithm $2$. Recall that $\vert \CMcal{T}_{3n-2}\vert=(2n)(2n+1)\cdots(3n-2)E_{2n-1}$, $\vert \CMcal{T}_{3n}\vert=(2n+2)(2n+3)\cdots(3n)E_{2n+1}$ for $n\ge 2$,
$\vert\CMcal{T}_{3}\vert=f^{\tau_3}=E_3$ and $\vert\CMcal{T}_0\vert=1$, it is easy to check that $\vert\CMcal{T}_{3n-3}\vert(3n-2)=\vert\CMcal{T}_{3n-2}\vert$ holds for $n\ge 1$, i.e., $\{\cdot\}\star\CMcal{T}^1=\CMcal{T}$.
Let $T_1(x)$ be the exponential generating function for the numbers $\vert \CMcal{T}_{3n}\vert$,
\begin{eqnarray*}
T_1(x)=1+\frac{f^{\tau_3}x^3}{3!}+\sum_{n\ge 2}\frac{\vert \CMcal{T}_{3n}\vert x^{3n}}{(3n)!}
=\sum_{n\ge 0}\frac{\vert \CMcal{T}_{3n}\vert x^{3n}}{(3n)!}.
\end{eqnarray*}
The relation $\{\cdot\}\star\CMcal{T}^1=\CMcal{T}$ is equivalent to $xT_1(x)=T(x)$. From Algorithm $2$, we observe, for every $T_1\in \CMcal{T}_{0}\bigcup\CMcal{T}_{3}$ and for every $(T_1,T_2,\ldots,T_m)\in\CMcal{T}_{3n}$ where $n\ge 2$, if $m=1$, $T_1$ is either a $3$-strip tableau of shape $\tau_{3n}^d$ where $n\ge 1$ or $T_1=\epsilon$, namely $T_1$ belongs to the class $\CMcal{H}^d$ or $T_1$ is an empty structure. If $m\ge 2$, then for every $i\in [1,m]$, $T_i$ has shape $\tau_{3j_i}^d$ for some $j_i\ge 1$, namely $T_i$ belongs to the class $\CMcal{H}^d$. In sum, we have
\begin{eqnarray}
\CMcal{T}^1=\{\epsilon\}+\CMcal{H}^d+\bigcup_{k\ge 2}\textsc{Seq}_{k}(\CMcal{H}^d)=\textsc{Seq}(\CMcal{H}^d)\,\mbox{ and }\, \{\cdot\}\star\CMcal{T}^1=\CMcal{T}.
\end{eqnarray}
In terms of the exponential generating functions, we get
\begin{eqnarray}\label{E:de2}
\frac{T(x)}{x}=T_1(x)=\frac{1}{1-H(x)}.
\end{eqnarray}
In combination of eq.~(\ref{E:de1}), eq.~(\ref{E:de2}) and $G(x)=xF(x)/2$, it follows that
\begin{eqnarray}\label{E:de3}
T(x)=\frac{4F(x)}{4-xF^2(x)}.
\end{eqnarray}
According to the Algorithm $1$, we have
\begin{eqnarray}\label{E:epT}
\quad\sqrt{x}T(x)=\sum_{n\ge 1}\frac{\vert\CMcal{T}_{3n-2}\vert x^{3n-3/2}}{(3n-2)!}
=\sum_{n\ge 1}\frac{\vert\CMcal{A}_{n}\vert x^{3n-3/2}}{(3n-2)!}=\sum_{n\ge 1}\frac{E_{2n-1}x^{3n-3/2}}{(2n-1)!}=\tan(x^{3/2}),
\end{eqnarray}
therefore we could solve $F(x)$ from eq.~(\ref{E:de3}), which gives us $F(x)=2/\sqrt{x}\tan(x^{3/2}/2)$
and consequently in view of eq.~(\ref{E:epT}), we get $F(2^{2/3}x)=2^{2/3}T(x)$, from which eq.~(\ref{E:bij2}) is proved and thus the proof of eq.~(\ref{E:bij1}) is complete. Next we shall prove eq.~(\ref{E:taubij}). For $n\ge 2$, we set $\vert\CMcal{M}_{3n}\vert=(2n)(2n+1)\cdots (3n)E_{2n-1}$, which counts the ways to insert two distinct integers $x_{n},x_{n+1}$ from $[3n]$ into a pair $(\sigma,(x_1,\ldots,x_{n-1}))$ where $(x_1,\ldots,x_{n-1})$ is a sequence of $(n-1)$ distinct positive integers from $[3n]-\{x_n,x_{n+1}\}$ and $\sigma=\sigma_1\sigma_2\cdots \sigma_{2n-1}$ is an up-down permutation on the set $[3n]-\{x_1,\ldots,x_{n+1}\}$. For $n=1$, we set $\vert\CMcal{M}_3\vert=3\cdot 2\cdot E_1$, which counts the ways to insert two distinct integers $x,y$ from $[3]$ to a single point labeled with $z$ where $z\in[3]-\{x,y\}$. This implies $\vert\CMcal{M}_{3n}\vert=(3n)(3n-1)\vert\CMcal{T}_{3n-2}\vert$ for $n\ge 1$ and therefore the exponential generating functions for the numbers $\vert\CMcal{M}_{3n}\vert$ is $x^2T(x)$. Now it remains to prove $\vert\CMcal{M}_{3n}\vert=(2^{2n}-1)f^{\tau_{3n}}$. From eq.~(\ref{E:de2}) and eq.~(\ref{E:epT}), we get
$H(x)=1-x^{3/2}/\tan(x^{3/2})$ and consequently in view of eq.~(\ref{E:epT}) we get $H(2^{2/3}x)-H(x)=x^2T(x)$, from which it follows that $\vert\CMcal{M}_{3n}\vert=(2^{2n}-1)f^{\tau_{3n}}$ and therefore eq.~(\ref{E:taubij}) is proved.
\qed

{\bf Remark}: Algorithm $1$ enriches every up-down permutation from below. In view of the bijection between down-up permutations and up-down permutations, Algorithm $2$ actually enriches every up-down permutation from above. They are consistent with the generalization of up-down permutations via ``thickening'' the $2$-strip tableaux.

In the bijective proof of Theorem~\ref{T:3strip}, via Algorithm $1$ and $2$ we get a sequence of independent $3$-strip tableaux. It would be ideal to keep the Hasse diagram connected when inserting a sequence of integers to an up-down permutation. However, this attempt has met with frustratingly little progress. One major difficulty is to keep track of the multiplicity for every $3$-strip tableau after inserting a sequence of integers to an up-down permutation.

\end{document}